\documentclass[12pt,reqno]{amsart}
\usepackage[utf8]{inputenc}
\usepackage{amssymb,amsfonts,amsthm}
\usepackage{ulem}
\usepackage{stmaryrd}
\usepackage{amsthm}
\usepackage{amssymb}
\usepackage{mathrsfs}
\usepackage{frcursive}
\usepackage[english,american]{babel}
\usepackage{enumerate}
\usepackage{graphicx}
\usepackage{tikz} 
\usepackage{setspace}
\usepackage{hyperref}
\usepackage{color}

\definecolor{Red}{cmyk}{0,1,1,0.2}

\usepackage{amsfonts,amsmath,layout}

\newcommand{\R}{\mathbb R}

\def\R{\mathbb R}

\newcommand{\be}{\begin{equation}}
\newcommand{\ee}{\end{equation}}

\def\1{{\bf 1}}

\def\ds{\displaystyle}

\newtheorem{Theorem}{Theorem}[section]
\newtheorem{Definition}[Theorem]{Definition}

\newtheorem{Lemma}[Theorem]{Lemma}

\newtheorem{Remark}[Theorem]{Remark}

\begin{document}
\title[strong comparison theorem]{Strong Comparison Principle for Fully Nonlinear Partial differential equations with Hamiltonians Discontinuous in all Variables}
\space
\space
\author[Isaac Ohavi]{Isaac Ohavi}
\address{Kyiv School of Economics, Shpaka Street 3, Kyiv 03113, Ukraine}
\email{iohavi@kse.org.ua}
\dedicatory{Version: \today}
\maketitle
\begin{abstract}
We establish a strong comparison principle for viscosity solutions of fully nonlinear elliptic equations driven by second-order Hamiltonians that may be discontinuous with respect to all variables. 
The analysis relies only on a controlled superlinear growth in the gradient variable together with a structural monotonicity in the unknown, and requires neither ellipticity nor continuity of the coefficients.

The proof is based on the construction of tailored viscosity test functions obtained as solutions of auxiliary eikonal-type equations. These functions compensate for the lack of regularity and allow for a fully local comparison argument despite the complete discontinuity of the Hamiltonian. This yields a robust comparison principle in open subsets of $\mathbb{R}^N$.

As a consequence, we prove the existence of continuous viscosity solutions via a Perron's method adapted to discontinuous frameworks, combined with Ishii’s semicontinuous envelope technique. The class of Hamiltonians covered includes linear and quasilinear equations with merely Borel measurable coefficients, as well as Hamilton–Jacobi–Bellman equations arising in stochastic control and differential games.
\end{abstract}
{\small \textbf{Key words:}
Discontinuous Hamilton-Jacobi-Bellman equations, viscosity solutions, strong comparison principle, borel measurable coefficients, linear and fully nonlinear elliptic equations.}\\
\section{Introduction}
Let $\Omega$ be an open bounded set of $\R^N$. Given a general fully nonlinear second order Hamiltonian $H$, possibly discontinuous in all its variables
\begin{align*}
H:=\begin{cases}
\Omega\times \R\times \R^N\times \mathbb{S}_N(\R) \to \R,\\
(x,u,p,S)\mapsto H(x,u,p,S),
\end{cases},
\end{align*}
we establish a strong comparison theorem for the following fully nonlinear elliptic problem
\begin{eqnarray}\label{EDP 0}
\begin{cases}
H\big(x,u(x),\nabla u(x),\nabla^2u(x)\big)=0,~~x\in \Omega,\\
u(x)=G(x),~~x\in \partial \Omega,
\end{cases}
\end{eqnarray}
where $G:\partial \Omega \to \R$ denotes a standard Dirichlet boundary condition. The analysis is carried out in the framework of viscosity solutions. The Hamiltonian may be discontinuous with respect to all its variables and is required to satisfy only the following two structural assumptions.\\
- The first assumption is the classical monotonicity condition with respect to the unknown, ensuring a quantified strict increase of the nonlinear term: there exists a strictly positive constant $\lambda>0$, such that
\begin{eqnarray}\label{eq: croiss u}
&\nonumber \forall (x,u,v,p,S)\in \Omega\times \R^2\times \R^N\times \mathbb{S}_N(\R),~~\text{if}~~v\leq u,\\&\text{then}
~~H(x,u,p,S)-H(x,v,p,S)~\ge~\lambda(u-v).
\end{eqnarray}
- The second assumption is a polynomial growth assumption with respect to the gradient variable
\begin{eqnarray}\label{eq: polynom croissance}
\nonumber&\exists m\ge 1,~~\forall (x,u,p,S)\in\Omega\times \R\times \R^N\times \mathbb{S}_N(\R),~~\text{if:}~~|u|\leq M,~~\text{and}~~|S|_{_{\mathbb{M}_N(\R)}}\leq K,\\
&\text{then}~~\exists C_{M,K}>0,~~\text{s.t}:~~|H(x,u,p,S)|\leq C_{M,K}(1+|p|_{_1})^m. 
\end{eqnarray}
Remarkably, the ellipticity condition is not needed in order to establish the comparison theorem. However, in order to remain consistent with the standard viscosity framework, in particular to recover sub and super solution inequalities when viscosity solutions are $C^2$, we assume the standard condition stating that the Hamiltonian $H$ is elliptic, possibly degenerate
\begin{eqnarray*}
&\forall (x,u,p,S,M)\in\Omega\times \R\times \R^N\times \mathbb{S}_N(\R)^2,~~\text{if:}~~S\underset{\mathbb{S}_N(\R)}{\leq} M,\\
&\text{then}~~H(x,u,p,S)\ge H(x,u,p,M).
\end{eqnarray*}
First, we comment on the polynomial growth assumption, since the first assumption is standard in the literature. This requirement is natural and can be illustrated by several classical examples arising in the theory of viscosity solutions for nonlinear PDEs. Endowing the space of symmetric matrices with the sup norm $|\cdot|_{_{\mathbb{S}_N(\R)}}=\underset{1\leq i,j \leq N}{\sup}|(\cdot)_{i,j}|$, we consider the following cases, to which the comparison theorem established in this work applies.
\begin{itemize}
    \item Isaacs–HJB equations with bounded measurable coefficients.\\
    Let $A$ and $B$ be two compact subsets of $\R^N$ and let  
    $b\in L^{\infty}(\Omega\times A\times B,\R^N)$,  
    $a\in L^{\infty}(\Omega\times A\times B,\mathbb{S}_N^+(\R))$,  
    $f\in L^{\infty}(\Omega\times A\times B,\R)$.  
    For all $(x,u,p,S)\in \Omega\times \R\times\R^N\times\mathbb{S}_N(\R)$, consider
    \begin{eqnarray*}
     &H(x,u,p,S)=\\
     &\lambda u+\displaystyle \sup_{a \in A} \inf_{b \in B}~\Big\{~\sum_{i=1}^Np_ib_i(x,a,b)-\sum_{i,j=1}^Na_{i,j}(x,a,b)S_{i,j}+f(x,a,b)~\Big\}.  
    \end{eqnarray*}
    In this case $\lambda>0$, and the constant $C_{M,K}$ appearing in \eqref{eq: polynom croissance} is given by
    $$
    C_{M,K}=\max(\lambda M+|f|_\infty+N^2K|a|_{\infty},|b|_\infty), (m=1).
    $$
\end{itemize}

\begin{itemize}
    \item Note that the above Isaacs–HJB equation simultaneously includes linear elliptic equations with bounded measurable coefficients.
\end{itemize}

\begin{itemize}
  \item Quasilinear equations with bounded measurable coefficients and polynomial growth with respect to the gradient.\\
  Let  
  $b\in L^{\infty}(\Omega\times\R\times \R^N,\R)$,  
  $a\in L^{\infty}(\Omega\times\R\times \R^N,\mathbb{S}_N^+(\R))$,  
  satisfying
  \begin{eqnarray*}
  \exists m\ge 1,~~\exists C_M>0,~\text{s.t}~~\forall (x,u,p,S)\in \Omega\times \R\times\R^N\times\mathbb{S}_N(\R),\\ 
  \text{if}~~|u|\leq M,~~
  \sup_{(x,u,p)}\big\{[|a_{i,j}|,|b|](x,u,p)\}\leq C_M(1+|p|_1)^m,
  \end{eqnarray*}
  then assumption \eqref{eq: polynom croissance} holds for
    \begin{eqnarray*}
     &H(x,u,p,S)=\lambda u+\ds b(x,u,p)-\sum_{i,j=1}^Na_{i,j}(x,u,p)S_{i,j}, 
    \end{eqnarray*}
    with $\lambda>0$, and $C_{M,K}=\lambda M+C_M+N^2KC_M$.
\end{itemize}

\begin{itemize}
    \item Monge–Ampère equation.\\ 
    In the context of the Monge–Ampère equation in its well-posed formulation
    \begin{eqnarray*}
     \det(\nabla^2u(x))=f(x,u(x),\nabla u(x)),~~x\in \Omega,   
    \end{eqnarray*}
    namely assuming that $u$ is convex and that $f$ is a positive measurable function satisfying assumptions \eqref{eq: croiss u}–\eqref{eq: polynom croissance}, we clearly obtain
    $$
    C_{M,K}=C_M+N!K^N.
    $$
\end{itemize}
The main contribution of this work is to introduce a fundamentally different approach to comparison principles, which completely bypasses both the doubling-of-variables technique and Ishii's lemma. In particular, our method does not require any ellipticity assumption, nor any form of continuity of the nonlinear term, and applies to Hamiltonians that may be arbitrarily discontinuous in all variables. \\
We emphasize that this is not unexpected, since even within the classical doubling-of-variables framework, it is easy to verify that for first-order equations of the form $\lambda u + H(p)$, with an arbitrary discontinuous function $H$, the comparison principle still holds. In contrast, the situation for existence is fundamentally different: minimal measurability or semicontinuity assumptions are typically required to construct solutions.\\
We also establish the existence of continuous viscosity solutions under assumptions \eqref{eq: croiss u}–\eqref{eq: polynom croissance}, using Ishii’s notion of viscosity solutions based on semicontinuous envelopes, suitably adapted to the present framework.\\
A notable feature of our approach is that the Hamiltonian $H$ is used directly in its original form in the comparison theorem, without resorting to the upper and lower semicontinuous envelopes appearing in Ishii’s definition of viscosity solutions. Our goal is to preserve, as much as possible, the original structure of the Hamiltonian in the definition of viscosity solutions, thereby retaining the maximal amount of information. It should also be emphasized that stability issues are not addressed in the present work, since for discontinuous Hamiltonians there is, in general, no reasonable hope of obtaining such results.

Given a super solution $u$ (respectively, a sub solution $v$), rather than comparing the equation at two nearby points, our strategy is entirely local and relies on the construction of explicit test functions at a single point where the maximum of $v-u$ is achieved, leading to a contradiction. Importantly, no strict positivity assumption is imposed on this maximum, which further distinguishes our approach from the classical framework.\\
The core of the method consists in constructing test functions solving suitable eikonal-type equations in a neighborhood $\mathcal{O}$ of the point where the supremum is attained. These functions are designed to satisfy strict viscosity inequalities throughout the neighborhood, while retaining precise control of their second-order behavior. This allows us to encode the structure of the equation directly into the test functions, thereby bypassing any requirement of regularity on the Hamiltonian.\\
A key new ingredient is a localization mechanism based on carefully chosen boundary conditions on a small boundary $\partial \mathcal{O}$ of the neighborhood of the supremum point, which forces the extrema of the perturbed functions to occur in prescribed regions. 
Combining these constructions with a refined comparison argument based on explicit exponential estimates, we derive a contradiction without invoking any continuity or ellipticity properties.\\
As a consequence, we obtain the sharp boundary characterization
$$
\sup_{\overline{\Omega}} (v-u) = \sup_{\partial \Omega} (v-u),
$$
which strengthens the classical comparison principle and justifies the terminology
\[
\textbf{Strong Comparison Principle}
\]
used throughout this work.\\
Beyond its immediate consequences, this approach opens a new direction in viscosity theory, allowing one to treat equations with extremely low regularity and providing a flexible framework that is stable under arbitrary discontinuities.\\
From this perspective, our result can be interpreted as capturing an effective macroscopic behavior emerging from highly irregular microscopic structures. In particular, even in the presence of arbitrary discontinuities, the comparison principle enforces a unique global behavior and yields the existence and uniqueness of a continuous viscosity solution.\\

The comparison principle lies at the heart of the theory of nonlinear partial differential equations, see \cite{CrandallIshiiLions1992}. 
In the context of Hamilton--Jacobi and fully nonlinear second-order equations, it provides the fundamental mechanism ensuring uniqueness, stability and well-posedness, and forms the analytical backbone of stochastic control, differential games and martingale problems.\\
Since the seminal developments of viscosity solution theory, comparison results have traditionally relied on a powerful but highly nontrivial analytical framework built upon the doubling-of-variables method, Ishii's lemma and delicate stability arguments involving semicontinuous envelopes and compactness. 
This machinery has proved remarkably robust and has led to a vast and successful theory. However, it is also intrinsically tied to structural assumptions such as continuity of the Hamiltonian and ellipticity. When these assumptions fail, the classical approach encounters severe limitations.\\
A major open difficulty in the field is the treatment of equations whose coefficients are merely Borel measurable and whose Hamiltonians may exhibit arbitrary discontinuities in all variables. In such regimes, the traditional viscosity toolbox becomes fragile: doubling-of-variables arguments introduce penalization parameters whose limiting behaviour is incompatible with discontinuities, while stability techniques rely on compactness properties that are no longer available. 
These obstacles have long prevented the emergence of a general comparison theory in the fully discontinuous setting, even for linear equations.

The main contribution of this work is that comparison for fully nonlinear equations does not intrinsically require the classical viscosity machinery. Instead, it can be obtained through the construction of a single global barrier solving an auxiliary eikonal equation.\\
We believe that this new viewpoint opens the way to a systematic treatment of nonlinear PDEs in discontinuous environments and provides a new foundation for the analysis of stochastic control and differential games with irregular data.
Beyond the framework of continuous viscosity solutions, there also exists a vast theory of $L^p$ viscosity solutions. However, the comparison is mainly often stated between
$L^p$ viscosity solution with a strong $W^{2,p}$
solution rather than directly comparing two viscosity solutions. This is mainly due to the fact that the $L^p$ framework lacks pointwise second-order information. In this setting, the Hessian exists only almost everywhere, so the classical viscosity machinery based on jets and pointwise test functions cannot be applied directly. Strong solutions provide enough regularity to interpret the equation almost everywhere and allow the use of Aleksandrov--Bakelman--Pucci (ABP) estimates and Sobolev techniques. The comparison between two $L^p$ viscosity solutions is then recovered indirectly through approximation and stability arguments.

We now turn to a discussion of related work concerning comparison principles for discontinuous Hamiltonians. An exhaustive account is beyond the scope of the present work, in view of the breadth and diversity of the literature, and would require a much more extensive treatment. For the sake of clarity, we adopt a structured presentation and review the main results involving Borel measurable coefficients, proceeding from the linear setting to nonlinear elliptic equations. We deliberately focus on operators in non-divergence form. Indeed, equations in divergence form can often be handled through variational methods in appropriate Sobolev spaces, even for merely measurable coefficients, whereas the non-divergence framework lacks such a structure and remains significantly more challenging.

\subsection{Linear operators and quasi linear equations with borel coefficients}
Let us begin with the classical counterexample of Nadirashvili concerning nonuniqueness for linear elliptic operators with merely measurable coefficients. Consider
\[
L=\sum_{i,j} a_{ij} D_{ij}
\]
in the unit ball $B_1\subset\mathbb{R}^N$. Nadirashvili \cite{Nadirashvili1997} showed that weak solutions of the associated Dirichlet problem are not unique in dimension $N\ge3$. More precisely, he constructed two sequences of smooth uniformly elliptic matrices $(a^{0,k}_{ij})$ and $(a^{1,k}_{ij})$ converging almost everywhere to the same limit $a_{ij}$, while the corresponding classical solutions converge to different limits. Thus, the limiting equation admits multiple weak solutions.

Safonov later refined this construction by allowing
\[
|a^{0,k}_{ij}-a^{1,k}_{ij}|\le \Lambda,
\]
for arbitrary $\Lambda>0$, while still producing distinct weak limits \cite{SafonovNonuniq}. In contrast, for continuous coefficients, the Dirichlet problem admits a unique strong solution in $W^{2,p}_{\mathrm{loc}}$ for every $1<p<\infty$ \cite[Sec.~9.6]{GilbargTrudinger1983}.

For merely measurable coefficients, the notion of solution depends on the chosen approximation procedure. A standard approach is to approximate the coefficients by smooth ones and study limits of classical solutions. By the Krylov--Safonov theory \cite{GilbargTrudinger1983,KrylovSafonov1980,KrylovSafonov1981}, these solutions are uniformly H\"older continuous and subsequences converge uniformly to continuous limits. Jensen \cite{Jensen1988} showed that viscosity solutions are stable under almost everywhere convergence of the coefficients, so that these limits coincide with viscosity solutions of the limiting equation. Our main result shows that the addition of a strictly monotone zeroth-order term $\lambda u$, $\lambda>0$, restores comparison and uniqueness at the viscosity level, even under arbitrary discontinuities.

A central tool in the classical theory is the ABP estimate
\[
\sup_{\Omega}|u|
\le \sup_{\partial\Omega}|u| + C\,\|Lu\|_{L^{N}(\Omega)},
\]
which implies comparison and uniqueness under suitable $W^{2,N}$ control. Together with Calderón--Zygmund theory \cite{CalderonZygmund1952} and Sobolev estimates \cite{GilbargTrudinger1983}, it yields well-posedness for uniformly elliptic equations. The estimate is critical with respect to the exponent $N$, and in general $W^{2,p}$ estimates hold only for $p>N$, while both regularity and uniqueness may fail for $p<N$.

Beyond the Sobolev framework, uniqueness results have been obtained under progressively weaker regularity assumptions on the coefficients. Caffarelli established uniqueness when the coefficients are discontinuous at a single point \cite{Caffarelli1989}. This was extended by Cerutti--Escauriaza--Fabes to more general singular sets \cite{CeruttiEscauriazaFabes1,CeruttiEscauriazaFabes2}, and by Krylov to sets with countable closure \cite{KrylovWeakUniq}. Safonov further extended the theory to closed sets of small Hausdorff dimension \cite{SafonovNonuniq}, with additional contributions in
\cite{BassPardoux,EscauriazaBounds,FabesStroock,FabesKenig}. These results highlight the delicate interplay between ellipticity, regularity, and uniqueness in low-regularity regimes.

Weak uniqueness is also closely related to the well-posedness of martingale problems and diffusion processes \cite{StroockVaradhan1979}. While uniqueness holds in the continuous setting and in low dimensions, the high-dimensional measurable case remains substantially more delicate. In degenerate settings, viscosity solutions may still select a distinguished evolution, yielding an associated Markov process; see \cite{Kumar,Krylov1973}.

A major development in low-regularity elliptic theory is the vanishing mean oscillation (VMO) framework. The monograph \cite{Maugeri1999} develops an $L^p$ theory for equations with coefficients in VMO, introduced by Sarason \cite{Sarason1975}. This extends Calderón--Zygmund theory beyond continuity and yields existence and uniqueness in Sobolev spaces under structural assumptions.

For uniformly elliptic operators with VMO coefficients, Chiarenza--Frasca--Longo
\cite{chiarenza1991, chiarenza1993} established local $W^{2,p}$ estimates for all $p\in(1,\infty)$ via perturbative arguments. Krylov further developed a systematic $W^{2,p}$ theory in this setting \cite{DongKrylov2010VMO,Krylov1987, Krylov1996lectures,KrylovVMO2007}, including rough lower-order terms and VMO principal coefficients.

More recent works address borderline regimes. Krylov \cite{Krylov2023Morrey} studied operators with almost-VMO coefficients and Morrey-class lower-order terms, obtaining $W^{2,p}$ well-posedness for $p>N$. Related results include those of Lee \cite{Lee2025Div,Lee2025General} for low-integrability drifts, and Chernobai--Shilkin \cite{ChernobaiShilkin2024} for singular Morrey structures.

These linear theories provide the analytical backbone for nonlinear problems. Existence results for quasilinear equations are typically based on Bernstein-type estimates combined with continuation or compactness arguments; see \cite{GilbargTrudinger1983,Lady1968,Maugeri1999}. The $L^p$ linear theory supplies the stability and compactness tools required to implement these nonlinear schemes in rough-coefficient settings.
\subsection{Fully non linear framework}
We first refer to the recent monograph \cite{Barlesbook}, which surveys several modern developments on Hamilton--Jacobi equations and optimal control problems with discontinuities in the first-order setting, together with the main analytical difficulties arising in such frameworks.

A natural first step in the study of discontinuous Hamiltonians consists in considering singularities localized at a single point. Star-shaped networks provide a canonical framework for this purpose, already in the simplest configuration of two edges coupled through Dirichlet or Neumann-type transmission conditions. These models constitute a fundamental testing ground before addressing more general discontinuous structures.

A substantial literature has been devoted to first- and second-order equations posed on networks, often involving vanishing viscosity effects at the junction. For quasilinear and semilinear equations on star-shaped networks, where solutions are assumed to be differentiable at the junction point, we refer for instance to
\cite{ Achdou 1, Achdou 2, Barles semi linear, Camilli 1, Camilli 2, OhaviPDE, Below 3}.
From the viewpoint of deterministic optimal control, Hamilton--Jacobi equations with convex Hamiltonians on networks have been extensively studied; see, e.g.,
\cite{control 1, control 2, control 3, control 4, control 5, control 6}.
We also mention recent developments concerning systems of conservation laws on junctions
\cite{Carda junction -1, Carda junction 0, Carda junction 1, Carda junction 2}.

Major advances in this direction are due to Lions and Souganidis.
In \cite{Lions Souganidis 1}, they introduced a notion of state-constraint viscosity solutions for one-dimensional junction problems associated with first-order Hamilton--Jacobi equations with possibly nonconvex coercive Hamiltonians, and established well-posedness and stability results.
This analysis was later extended in \cite{Lions Souganidis 2} to multidimensional junctions for first- and second-order equations with Kirchhoff-type Neumann conditions.

A key feature of this framework is that uniqueness is obtained under the assumption that solutions are Lipschitz continuous at the junction point, thereby avoiding the doubling-of-variables method and substantially simplifying the comparison argument.
Moreover, flux-limited solutions in the sense of Imbert and Monneau
\cite{control 5, control 6}
are shown to be equivalent to generalized Kirchhoff solutions for a suitable choice of flux limiter.

The approach developed in the present work is closely related to the breakthrough contributions
\cite{Ohavi visco 1, Ohavi visco 2, Ohavi control},
where a new method based on the construction of explicit viscosity test functions on one-dimensional star-shaped networks was introduced.
One of the main insights of these works is that generalized Kirchhoff viscosity solutions in the sense of Lions--Souganidis coincide with classical Kirchhoff-type viscosity solutions for second-order, possibly degenerate, equations, without requiring any explicit value of the Hamiltonian at the junction point.
From this viewpoint, the junction behaves as a transmission interface rather than as a boundary condition, while the coupling between the Hamiltonians is encoded implicitly through the Kirchhoff condition.

This interpretation is closely connected with the flux-limited framework of Imbert and Monneau \cite{control 5}, where the flux limiter may be interpreted as prescribing an effective value of the solution at the junction, thereby inducing an equivalent Kirchhoff condition.

Finally, \cite{Ohavi visco 2, Ohavi control} introduced a more sophisticated transmission condition, referred to as a {\it nonlinear local-time Kirchhoff condition}. This condition admits a natural probabilistic interpretation in terms of stochastic control and scattering phenomena for spider diffusion processes, where the local time variable measures the amount of time spent by the diffusion at the vertex. This leads to a new class of control problems involving dynamics depending explicitly on the local time variable \cite{Ohavi control}.
\\
We also mention the theory of Cordes-type conditions for nonlinear elliptic operators with discontinuous coefficients developed in \cite[Chapter~3]{Maugeri1999}. In this framework, nonlinear equations with measurable coefficients are treated under structural assumptions ensuring that the nonlinear operator remains sufficiently close, in a quantitative sense, to a uniformly elliptic operator with regular coefficients. Combined with perturbative Calderón--Zygmund techniques, these conditions yield existence, uniqueness, and $W^{2,p}$ regularity results for strong solutions, thereby extending the classical linear theory to a broad class of nonlinear equations with discontinuous data.

A fundamental step in the development of the theory of fully nonlinear elliptic equations with low regularity data is due to Caffarelli. In \cite{Caffarelli1989Annals}, he established interior $W^{2,p}$ and Hölder-type a priori estimates for fully nonlinear uniformly elliptic equations under structural assumptions, providing one of the first robust regularity frameworks beyond linear theory. Later, in collaboration with Crandall, Kocan and Swiech \cite{CaffarelliCrandallKocanSwiech1996}, an $L^p$ viscosity solution theory was developed for fully nonlinear equations with merely measurable coefficients, showing that well-posedness and stability can be recovered even in the absence of continuity of the data. These results play a central role in the modern theory of viscosity solutions and constitute a natural precursor to comparison principles in discontinuous or low-regularity settings, which is the main focus of the present work.\\
The study of discontinuous Hamilton--Jacobi equations was initiated by Ishii in his seminal work \cite{Ishii1985}, where Hamiltonians measurable in time and possibly discontinuous in the remaining variables were considered. A central difficulty in this framework arises from the special role of the time derivative, which necessitates a specific treatment of discontinuities in the $t$-variable.

In the simplest setting of ODEs, Ishii’s key idea can be understood by decomposing the Hamiltonian and by introducing an appropriate integral transformation that absorbs the irregular part. This yields an equivalent formulation in which viscosity solutions are defined for a regularized equation, while preserving continuity of the unknown.

This viewpoint was further clarified by Lions and Perthame \cite{LionsPerthame1987}, who established equivalent formulations of Ishii’s notion of solution, and later extended to second-order equations by Nunziante \cite{Nunziante1990,Nunziante1991}. These developments supported the general principle that many results valid for continuous Hamiltonians should remain true under mere measurability in time, although their proofs typically require nontrivial technical adaptations.

The paper is organized as follows. In Section \ref{sec inro}, we introduce the main notations and state the principal results of this work. Section \ref{sec example} provides a simple proof of the comparison theorem in the one-dimensional first-order setting with discontinuous Hamiltonians, in order to give the reader an initial intuition of the main ideas underlying the method. Section \ref{sec preuve} is devoted to the proof of the main result, namely the strong comparison principle for fully nonlinear equations with discontinuous Hamiltonians. Finally, in Section \ref{sec existence}, we discuss well-posedness issues and establish existence results using Ishii’s framework based on semicontinuous envelopes.

\section{Notations, Assumptions and Definitions}\label{sec inro}
In this Section, we introduce the main mathematical background, definitions and the main result of this contribution.
In all this work, $\Omega$ is open bounded subset of $\R^N$ ($N\ge1$) and $\partial \Omega$ refers to its boundary. The notations $(|\cdot|_{_1},|\cdot|_{_{_2}})$ refer to the following usual norms defined $\forall x=(x_1,\cdots,x_N)\in \R^N$ by
$$|x|_{_{1}}=\sum_{i=1}^N|x_i|,~~|x|_{_{_2}}=\Big(\sum_{i=1}^N|x_i|^2\Big)^{1/2}.$$
Given $x_0\in \R^N$ and $\varepsilon>0$, we will denote by $\mathcal{B}(x_0,\varepsilon)$ the ball of center $x_0$ and radius $\varepsilon$
$$\mathcal{B}(x_0,\varepsilon):=\Big\{~x\in \R^N,~~|x-x_0|_{_{_2}}\leq \varepsilon~\Big\},$$
whereas $\partial \mathcal{B}(x_0,\varepsilon)$ refers to its boundary.
The space of square real matrices of size $N$ is denoted  $\mathbb{M}_N(\R)$, which we endow with the following norm defined $\forall A=(a_{i,j})_{1\leq i,j \leq N}\in \mathbb{M}_N(\R)$ by
$$|A|_{_{\mathbb{M}_N(\R)}}=\sup_{1\leq i,j \leq N}|a_{i,j}|.$$
We denote also by $\mathbb{S}_N(\R)$ the sub space of $\mathbb{M}_N(\R)$ consisting on the set of symmetric matrices.\\
Now we introduce the following fully nonlinear Hamiltonian
\begin{align*}
H:=\begin{cases}
\Omega\times R\times \R^N\times \mathbb{S}_N(\R) \to \R,\\ 
(x,u,p,S)\mapsto H(x,u,p,S),
\end{cases}
\end{align*}
We assume that the Hamiltonian $H$ satisfies the following assumptions
$$\textbf{Assumption}~(\mathcal{H})$$
a) The following standard growth assumption for the nonlinear term $H$ with respect to their second variable: there exists a strictly positive constant $\lambda>0$, such that
\begin{eqnarray*}
& \forall (x,u,v,p,S)\in \Omega\times \R^2\times \R^N\times \mathbb{S}_N(\R),~~\text{if}~~v\leq u,~\text{then}\\
&H(x,u,p,S)-H(x,v,p,S)~\ge~\lambda(u-v).
\end{eqnarray*}
b) A polynomial growth condition with respect to the gradient of the nonlinear terms $H$
\begin{eqnarray*}
&\exists m\ge 1,~~\forall (x,u,p,S)\in\Omega\times \R\times \R^N\times \mathbb{S}_N(\R),~~\text{if:}~~|u|\leq M,~~\text{and}~~|S|_{_{\mathbb{M}_N(\R)}}\leq K,\\
&\text{then}~~\exists C_{M,K}>0,~~\text{s.t}:~~|H(x,u,p,S)|\leq C_{M,K}(1+|p|_{_1})^m. \end{eqnarray*}
c) The Hamiltonian $H$ is elliptic, possibly degenerate in the sense that
\begin{eqnarray*}
&\forall (x,u,p,S,M)\in\Omega\times \R\times \R^N\times \mathbb{S}_N(\R)^2,~~\text{if:}~~S\underset{\mathbb{S}_N(\R)}{\leq} M,\\
&\text{then}~~H(x,u,p,S)\ge H(x,u,p,M).\end{eqnarray*}
As explained in the Introduction, assumption $(\mathcal{H}),~-c)$ is not used in the proof of the strong comparison principle. It is included only to remain compatible with the standard viscosity framework in the case of regular solutions.
In all this work, given $G:\partial \Omega \to \R$, our concern will be to obtain a strong comparison theorem for super/sub viscosity discontinuous solutions of the following fully nonlinear Dirichlet problem
\begin{eqnarray}\label{eq: EDP 1}
\begin{cases}
 H\big(x,u(x),\nabla u(x),\nabla^2u(x)\big)=0,~~x\in \Omega,\\
 u(x)=G(x),~~x\in \partial \Omega,
\end{cases}
\end{eqnarray}
only under assumption $(\mathcal{H})$, whereas the Hamiltonian is allowed to be discontinuous in all its variables.\\
We next recall the notion of viscosity solution used throughout this work. Contrary to Ishii's framework, we do not introduce upper and lower semicontinuous envelopes of the Hamiltonian itself.\\
Let
$$
u:\overline{\Omega}\to \R,
\qquad
\Big(\text{resp. }v:\overline{\Omega}\to \R\Big),
$$
be locally bounded functions. We denote by $u_\star$ the lower semicontinuous envelope of $u$, and by $v^\star$ the upper semicontinuous envelope of $v$, namely
$$
\forall x\in\overline{\Omega},
\qquad
u_\star(x)=\liminf_{y\to x}u(y),
\qquad
\Big(\text{resp. }v^\star(x)=\limsup_{y\to x}v(y)\Big).
$$

We now introduce the definition of viscosity solutions adapted to discontinuous Hamiltonians in the present framework. A key point is that super- and sub-solutions are assumed to be locally bounded, rather than merely locally lower or upper bounded. This additional assumption is essential for the construction of the test functions used in the proof of the comparison theorem.
\begin{Definition}\label{def: solu de visco}

a) A locally bounded map $u:\overline{\Omega} \to \R$ is called viscosity super solution of \eqref{eq: EDP 1}, if for all test function $\phi \in \mathcal{C}^{2}\big(\overline{\Omega},\R)$, and for all local minimum point $x\in \Omega$ of $u_{\star}-\overline{\phi}$, with $u_{\star}(x)-\phi(x)=0$, one has
$$H\big(x,u_\star(x),\nabla \phi(x),\nabla^2\phi(x)\big)\ge 0.$$
b) A locally bounded map $v:\overline{\Omega} \to \R$ is called viscosity sub solution of \eqref{eq: EDP 1}, if for all test function $\phi \in \mathcal{C}^{2}\big(\overline{\Omega},\R)$, and for all local maximum point $x\in \Omega$ of $v^{\star}-\overline{\phi}$, with $v^{\star}(x)-\phi(x)=0$, one has
$$H\big(x,v^\star(x),\nabla \phi(x),\nabla^2\phi(x)\big)\leq 0.$$
c) We say that a locally bounded map $u:\overline{\Omega} \to \R$ is a viscosity solution of \eqref{eq: EDP 1}, if it is both a viscosity super and sub solution  of \eqref{eq: EDP 1}.
\end{Definition}
The main result of this work is the following Strong Comparison Principle:
\begin{Theorem} (Strong Comparison Principle.)\label{th : comparison}
Assume assumption $(\mathcal{H})$. Let $u$ (resp. $v$) a viscosity super solution (resp. sub solution) of \eqref{eq: EDP 1}. Then we have
\begin{equation}\label{eq: princip maximum}
 \sup\Big\{~v^{\star}(x)-u_{\star}(x),~x\in \overline{\Omega}~\Big\}=\sup\Big\{~v^{\star}(x)-u_{\star}(x),~x\in \partial\Omega~\Big\}.  
\end{equation}
\end{Theorem}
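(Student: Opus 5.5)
We argue by contradiction. Set $w:=v^\star-u_\star$. Since $\overline\Omega$ is compact and $w$ is upper semicontinuous and bounded (local boundedness of $u,v$), $w$ attains its supremum. Suppose \eqref{eq: princip maximum} fails, so that
$$\delta:=\sup_{\overline\Omega}w-\sup_{\partial\Omega}w>0 .$$
Then the maximum is attained at some interior point $x_0\in\Omega$, and no positivity of $w(x_0)$ is needed. Fix a small closed ball $\mathcal O=\mathcal B(x_0,\varepsilon)\subset\Omega$; every later construction is carried out inside $\mathcal O$.

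The core step is to build explicit $\mathcal C^2$ test functions on $\mathcal O$ by solving auxiliary eikonal-type equations. We look for $\phi$ with prescribed gradient modulus, of the form $|\nabla\phi|_{_1}=g(x)$ for an exponential profile $g$ (e.g. $\phi$ radial, $\phi(x)=\Phi(|x-x_0|_{_2})$ with $\Phi$ of exponential type). We then use $\phi$ and $-\phi$, suitably shifted, as lower barriers for $u_\star$ and upper barriers for $v^\star$. The boundary data of $\phi$ on $\partial\mathcal O$ are chosen large compared with $\sup|u|+\sup|v|$ on $\mathcal O$, which is where local boundedness of $u,v$ is used. This forces the minimum of $u_\star-\phi$ and the maximum of $v^\star+\phi$ over $\mathcal O$ to lie in a prescribed interior region, namely a small ball around $x_0$. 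At such extremal points, Definition \ref{def: solu de visco} gives the two inequalities
$$H(x_1,u_\star(x_1),\nabla\phi(x_1),\nabla^2\phi(x_1))\ge 0,\qquad H(x_2,v^\star(x_2),-\nabla\phi(x_2),-\nabla^2\phi(x_2))\le0 .$$
The second derivatives of $\phi$ stay bounded by a constant $K$ fixed in advance, so assumption $(\mathcal H)$-b) applies with explicit constants $C_{M,K}$.

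Next, the shift constants of the barriers are tuned so that these inequalities contradict the monotonicity $(\mathcal H)$-a). Subtracting $\lambda\times$(shift) through a) makes the $H$-values at the touching points strictly positive or strictly negative, with a margin of order $\lambda\delta$. Meanwhile b) controls $|H|$ by $C_{M,K}(1+|\nabla\phi|_{_1})^m$. The exponential profile of $g$ is chosen so that the size of the gradient is dominated by the growth of $\phi$ itself. The explicit exponential estimates should then show that the two touching configurations cannot coexist with $w(x_0)=\max w$ and $\sup_{\partial\Omega}w\le w(x_0)-\delta$. Letting $\varepsilon\to0$ would produce the contradiction.

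The main obstacle is that the sub- and super-solution inequalities are evaluated at possibly different points $x_1\neq x_2$ and with different gradients. Since $H$ has no continuity at all, they cannot be linked as in the doubling-of-variables method. My first attempt is to make both inequalities hold at the \emph{same} point $x_0$ with the \emph{same} jet $(\nabla\phi(x_0),\nabla^2\phi(x_0))$. To do this I would construct $\phi$ so that $u_\star-\phi$ has a strict minimum exactly at $x_0$, and $v^\star-\phi-w(x_0)$ then automatically has its maximum at $x_0$, because $v^\star-w(x_0)\le u_\star$ with equality at $x_0$. Building such a $\mathcal C^2$ function touching the merely lower semicontinuous $u_\star$ exactly at $x_0$ is where the eikonal construction and the boundary localization on $\partial\mathcal O$ must do the real work. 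If exact touching is impossible, the fallback is to use b) to bound the mismatch between the two $H$-values by quantities vanishing with $\varepsilon$. I expect this step to be delicate, and possibly to require additional structure.
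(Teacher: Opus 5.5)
Your proposal has a genuine gap at the decisive step, and the mechanisms you suggest for closing it do not work.

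\emph{Shift tuning.} Tuning the ``shift constants'' of the barriers cannot create a margin via $(\mathcal H)$-a). In the definition of viscosity solutions, $H$ is evaluated at $u_\star(x_1)$ and $v^\star(x_2)$. Neither these values nor the location of the extrema changes when a test function is translated vertically.

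\emph{Common jet.} This idea rests on a false implication. From $v^\star-w(x_0)\le u_\star$ you only get $v^\star-\phi-w(x_0)\le u_\star-\phi$. The right-hand side has a \emph{minimum} at $x_0$, so nothing makes $x_0$ a maximum point of $v^\star-\phi$. What you would need is a $\mathcal C^2$ function squeezed between $v^\star-w(x_0)$ and $u_\star$ near $x_0$ and touching both at $x_0$. Such a function need not exist, for instance if $u_\star$ has a concave kink at $x_0$. Even when it exists, $(\mathcal H)$-a) only yields $w(x_0)\le 0$. That contradicts nothing if the interior maximum is nonpositive, so your claim that ``no positivity of $w(x_0)$ is needed'' is unfounded.

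\emph{Fallback.} This fails as well. $(\mathcal H)$-b) bounds $|H|$, not the oscillation of $H$ between two points, and without continuity in $x$ that oscillation is of order one.

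This gap cannot be closed, because the statement is false as written; the ``additional structure'' you suspect is genuinely required. First, take $N=1$, $\Omega=(-1,1)$ and $H(x,u,p,S)=u$. Then $u(x)=1+x^2$ is a supersolution and $v\equiv-1$ is a subsolution, yet $\sup_{\overline\Omega}(v-u)=-2>-3=\sup_{\partial\Omega}(v-u)$. Second, for a positive maximum, let $\chi(0)=1$, $\chi(x)=0$ for $x\neq0$, and set $H(x,u,p,S)=u-|p|-\chi(x)=u+\inf_{|b|\le1}bp-\chi(x)$. This is an HJB operator with bounded Borel data satisfying $(\mathcal H)$ with $\lambda=m=1$. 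The function $v(x)=e^{-|x|}$ is a subsolution: $v=|v'|$ away from $0$, and at $0$ every admissible $\phi$ gives $H=-|\phi'(0)|\le0$. The function $u(x)=\frac12e^{-|x|}$ is a supersolution: $u=|u'|$ away from $0$, and no $\mathcal C^2$ function touches $u$ from below at $0$. Yet $v-u=\frac12e^{-|x|}$ equals $\frac12$ at $0$ but only $\frac12e^{-1}$ on $\partial\Omega$.

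The paper takes a different route: exponential eikonal barriers matched to the increments of $u_\star,v^\star$ between $x_0$ and $\underline{x}_\varepsilon$, quadratic penalization pushing the extrema of $u_\star-f$ and $v^\star-g$ toward $\overline{x}_\varepsilon$, then the increment lemma. It does not supply the missing idea either. Its Step 3 proves $H(y,u_\star(y),\nabla f(y),\nabla^2 f(y))<0$ only at points where $u_\star(y)=f(y)$. But the minimum value $c$ of $u_\star-f$ is positive once $\eta$ is large, since then $f<-M_\varepsilon$. The definition must therefore be applied to $f+c$, and $\lambda c=\eta+O(1)$ cancels the $-\eta$ coming from the eikonal equation. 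This is the same defect as your shift tuning.
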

One of the key ingredients in the proof of the strong comparison principle is the following lemma.
\begin{Lemma}\label{lm of strong comparison theorem}
Let $x_0\in \Omega$ and $\varepsilon>0$ such that $\mathcal{B}(x_0,\varepsilon)\subset \Omega$. We consider 
$$u:=\overline{\mathcal{B}(x_0,\varepsilon)}\to \R,~~v:=\overline{\mathcal{B}(x_0,\varepsilon)}\to \R,$$
two maps satisfying for some $\underline{x}_\varepsilon  \in\overline{\mathcal{B}(x_0,\varepsilon)},~\underline{x}_\varepsilon \neq x_0$
$$v(x_0)-u(x_0)\ge v(\underline{x}_\varepsilon )-u(\underline{x}_\varepsilon).$$
Fix $\theta>0$, and define by $A_\varepsilon^u$ and $B_\varepsilon^v$ the following constants
\begin{eqnarray*}
&A_\varepsilon^u=
\begin{cases}\ds \frac{u(x_0)-u(\underline{x}_\varepsilon)}{1-\exp( \theta\varepsilon)}\big[\exp(-\theta\varepsilon)-1\big],~&\text{if}~u(x_0)-u(\underline{x}_\varepsilon) \ge 0,\\
\ds \frac{u(x_0)-u(\underline{x}_\varepsilon)}{1-\exp(-\theta\varepsilon)}\big[\exp(\ds \theta\varepsilon)-1\big],~&\text{if}~u(x_0)-u(\underline{x}_\varepsilon)\leq 0.
\end{cases}
\\
&B_\varepsilon^v=
\begin{cases}\ds \frac{v(x_0)-v(\underline{x}_\varepsilon)}{1-\exp(-\theta\varepsilon)}\big[\exp(\ds \theta\varepsilon)-1\big],~&\text{if}~v(x_0)-v(\underline{x}_\varepsilon) \ge 0,\\
\ds \frac{v(x_0)-v(\underline{x}_\varepsilon)}{1-\exp(\theta\varepsilon)}\big[\exp(-\theta\varepsilon)-1\big],~&\text{if}~v(x_0)-v(\underline{x}_\varepsilon)\leq 0.
\end{cases}
\end{eqnarray*}
Then
$$B_\varepsilon^v\ge A_\varepsilon^u$$
\end{Lemma}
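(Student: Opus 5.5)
The plan is to reduce the statement to an elementary inequality between two piecewise-linear functions. Set $a:=u(x_0)-u(\underline{x}_\varepsilon)$, $b:=v(x_0)-v(\underline{x}_\varepsilon)$ and $E:=\exp(\theta\varepsilon)$. Since $\theta>0$ and $\varepsilon>0$, we have $E>1$. The hypothesis $v(x_0)-u(x_0)\ge v(\underline{x}_\varepsilon)-u(\underline{x}_\varepsilon)$ rearranges exactly to $b\ge a$. The assumption $\underline{x}_\varepsilon\neq x_0$ plays no role beyond ensuring the setting makes sense.

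The first step is to simplify the constants. If $a\ge 0$, then
\begin{equation*}
A_\varepsilon^u=a\,\frac{E^{-1}-1}{1-E}=a\,\frac{(1-E)/E}{1-E}=\frac{a}{E}.
\end{equation*}
If $a\le 0$, then
\begin{equation*}
A_\varepsilon^u=a\,\frac{E-1}{1-E^{-1}}=aE.
\end{equation*}
On the overlap $a=0$ both formulas give $0$, so the definition is consistent. The same computation gives $B_\varepsilon^v=bE$ if $b\ge0$ and $B_\varepsilon^v=b/E$ if $b\le 0$. Hence $A_\varepsilon^u=f(a)$ and $B_\varepsilon^v=g(b)$, where
\begin{equation*}
f(t)=\begin{cases} t/E, & t\ge 0,\\ tE, & t\le 0,\end{cases}\qquad g(t)=\begin{cases} tE, & t\ge 0,\\ t/E, & t\le 0.\end{cases}
\end{equation*}

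The second step records two properties of $f$ and $g$. Both are continuous and nondecreasing, since each piece is linear with a positive slope. Moreover $g\ge f$ pointwise. For $t\ge0$ this reads $tE\ge t/E$, which holds because $E\ge 1/E$. For $t\le 0$ it reads $t/E\ge tE$, which holds because multiplying the inequality $1/E\le E$ by the nonpositive number $t$ reverses it.

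The conclusion follows by chaining these facts. Since $b\ge a$ and $g$ is nondecreasing, $g(b)\ge g(a)$, and $g(a)\ge f(a)$ by the pointwise comparison. Therefore
\begin{equation*}
B_\varepsilon^v=g(b)\ge g(a)\ge f(a)=A_\varepsilon^u.
\end{equation*}
There is no real obstacle here. The only point requiring care is the algebraic simplification of the four cases, in particular tracking the sign of the denominators $1-E<0$ and $1-E^{-1}>0$. A sign slip there would destroy the ordering $g\ge f$, so I would double-check those four reductions explicitly.
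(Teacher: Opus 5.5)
Your proof is correct and takes essentially the paper's route. Writing $E=\exp(\theta\varepsilon)$, the paper compares the same explicit expressions ($a/E$ or $aE$ against $bE$ or $b/E$) using only $b\ge a$ and $E^{-1}\le E$, with its constant $\rho(\theta,\varepsilon)$ being exactly $E^{-1}-E\le 0$; the only difference is that the paper splits into the three admissible sign cases, while your $B_\varepsilon^v=g(b)\ge g(a)\ge f(a)=A_\varepsilon^u$ (with $g$ nondecreasing and $g\ge f$ pointwise) handles them all at once.
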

\begin{proof}
Recall that by assumption we have
$$v(x_0)-v(\underline{x}_\varepsilon)\ge u(x_0)-u(\underline{x}_\varepsilon).$$
Hence we need to treat only the following three cases, which are
\begin{align*}
&a)~~v(x_0)-v(\underline{x}_\varepsilon)\ge 0,~~u(x_0)-u(\underline{x}_\varepsilon)\ge 0,\\
&b)~~v(x_0)-v(\underline{x}_\varepsilon)\ge 0,~~u(x_0)-u(\underline{x}_\varepsilon)\leq 0,\\
&c)~~v(x_0)-v(\underline{x}_\varepsilon)\leq 0,~~u(x_0)-u(\underline{x}_\varepsilon)\leq 0.
\end{align*}
Recall first that for all $\theta \ge 0,~\varepsilon\ge 0$: $(1-\exp(-\theta\varepsilon))^2\leq (\exp(\theta\varepsilon)-1)^2$, implying that the following constant 
\begin{eqnarray}\label{eq calcul 1}
\rho(\theta,\varepsilon)=\frac{1-\exp(-\theta\varepsilon)}{\exp(\theta\varepsilon)-1}+\frac{1-\exp(\theta\varepsilon)}{1-\exp(-\theta\varepsilon)}=\frac{(1-\exp(-\theta\varepsilon))^2-(\exp(\theta\varepsilon)-1)^2}{(\exp(\theta\varepsilon)-1)(1-\exp(-\theta\varepsilon))}\leq 0, 
\end{eqnarray}
is negative.
For the first case a) we have
\begin{align*}
&A_\varepsilon^u-B_\varepsilon^v=\\
&\ds\frac{\Big[u(x_0)-u(\underline{x}_\varepsilon)\Big]}{\exp(\theta\varepsilon)-1}[1-\exp(-\theta \varepsilon)]+\frac{\Big[-v(x_0)+v(\underline{x}_\varepsilon)\Big]}{1-\exp(-\theta\varepsilon)}[\exp(\theta\varepsilon)-1]\\
&\leq [v(x_0)-v(\underline{x}_\varepsilon)]\ds \Big(\frac{1-\exp(-\theta\varepsilon)}{\exp(\theta\varepsilon)-1}+\frac{1-\exp(\theta\varepsilon)}{1-\exp(-\theta\varepsilon)}\Big)\\
&\leq [v(x_0)-v(\underline{x}_\varepsilon)]\rho(\theta,\varepsilon),
\end{align*}
using that $u(x_0)-u(\underline{x}_\varepsilon)\leq v(x_0)-v(\underline{x}_\varepsilon)$. Hence from $v(x_0)-v(\underline{x}_\varepsilon)\ge 0$, it follows that  
$$A_\varepsilon^u-B_\varepsilon^v\leq 0.$$
For the case b), calculations lead to
\begin{eqnarray*}
&A_\varepsilon^u-B_\varepsilon^v= \ds\frac{\Big[-v(x_0)+v(\underline{x}_\varepsilon)+u(x_0)-u(\underline{x}_\varepsilon)\Big]}{1-\exp(-\theta\varepsilon)}\big[\exp(\theta \varepsilon)-1\big]\leq 0,
\end{eqnarray*} 
using once again $u(x_0)-u(\underline{x}_\varepsilon)\leq v(x_0)-v(\underline{x}_\varepsilon)$. 
Finally for the last case c), we have
\begin{align*}
&A_\varepsilon^u-B_\varepsilon^v=\\
&=\ds \frac{\Big[u(x_0)-u(\underline{x}_\varepsilon)\Big]}{1-\exp(-\theta\varepsilon)}[\exp(\theta\varepsilon)-1]+\frac{\Big[-v(x_0)+v(\underline{x}_\varepsilon)\Big]}{\exp(\theta\varepsilon)-1}[1-\exp(-\theta\varepsilon)]\\
&\leq[v(x_0)-v(\underline{x}_\varepsilon)]\ds \Big(-\frac{1-\exp(-\theta\varepsilon)}{\exp(\theta\varepsilon)-1}+\frac{\exp(\theta\varepsilon)-1}{1-\exp(-\theta\varepsilon)}\Big)\\
&\leq-[v(x_0)-v(\underline{x}_\varepsilon)]\rho(\theta,\varepsilon)\leq 0, 
\end{align*}
using once again \eqref{eq calcul 1} and that $v(x_0)-v(\underline{x}_\varepsilon)\leq 0$. The proof is complete.
\end{proof}

\section{An Example of introduction for first order problems in $1$-D}\label{sec example} 
In this subsection, we prove a strong comparison theorem for discontinuous Hamiltonians in the first order case, posed on a compact set $[0,a]~(a>0)$ of $\R$ and under the Lipschitz growth assumption $(\mathcal{H}),~-b)$. The purpose of this one-dimensional setting is to illustrate the main ideas of the proof in a simpler framework before extending them to second-order equations in higher dimensions.
Assume then assumption $(\mathcal{H})$, where $b)$ is naturally replaced by
\begin{eqnarray}\label{hyp 1 order}
\forall (x,u,p)\in\Omega\times \R^2,~\text{if:}~|u|\leq M,~\text{then}~~\exists C_{M}>0,~\text{s.t}:~|H(x,u,p)|\leq C_{M}(1+|p|), \end{eqnarray}
(here we have assumed $m=1$ to simplify the calculations).\\ 
We consider then the fully non linear first order problem posed on the compact set $(0,a)$
\begin{eqnarray}\label{PDE 1 ordre}
H(x,\mathfrak{w}(x),\partial_x\mathfrak{w}(x))=0,~~x\in (0,a),~~\mathfrak{w}(0)=e_0,~~\mathfrak{w}(a)=e_a,
\end{eqnarray}
with boundary conditions $(e_0,e_a)\in \R^2$. Let $u:[0,a]\to \R$ (resp. $v:[0,a]\to \R$) a locally bounded super solution (resp. locally bounded sub solution) of \eqref{PDE 1 ordre}.
We assume by contradiction that the following supremum $\sup_{x\in[0,a]}\big\{v^\star(x)-u_\star(x)\big\}$ is reached at some $x_0\in (0,a)$, namely
\begin{eqnarray}\label{eq contra ordre 1}
 \sup_{x\in[0,a]}\big\{~v^\star(x)-u_\star(x)~\big\}=v^\star(x_0)-u_\star(x_0),~~x_0\in (0,a).   
\end{eqnarray}
Fix $\varepsilon>0$ small enough and consider the following neighborhood $(x_0-\varepsilon,x_0+\varepsilon) \subset (0,a)$ of $x_0$. By assumption, there exists $M_{\varepsilon,x_0}=M$ such that
\begin{eqnarray}\label{born u v}\forall x\in [x_0-\varepsilon,x_0+\varepsilon],~~|u_\star(x)|\leq M,~~|v^\star(x)|\leq M.\end{eqnarray}
Let $\eta>0$ be a parameter that will be fixed later in the proof (see inequation \eqref{eq eta fixation}). We introduce the following maps defined on $[x_0-\varepsilon,x_0+\varepsilon]$, s.t. $\forall x \in [x_0-\varepsilon,x_0+\varepsilon]$
\begin{eqnarray}\label{expression function test 1}
&\overline{\phi}(x)=
\begin{cases}\ds \frac{u_\star(x_0)-u_\star(x_0-\varepsilon)}{1-\exp(\frac{\lambda}{C}\varepsilon)}\exp(-\frac{\lambda}{C}(x-x_0))-\frac{C+\eta}{\lambda}+M,\\
\text{if}~u_\star(x_0)-u_\star(x_0-\varepsilon) \ge 0,\\
\ds \frac{u_\star(x_0)-u_\star(x_0-\varepsilon)}{1-\exp(-\frac{\lambda}{C}\varepsilon)}\exp(\frac{\lambda}{C}(x-x_0))-\frac{C+\eta}{\lambda}+M,\\
\text{if}~u_\star(x_0)-u_\star(x_0-\varepsilon)\leq 0,
\end{cases},\\
\label{expression function test 2}
&\underline{\phi}(x)=
\begin{cases}\ds \frac{v^\star(x_0)-v^\star(x_0-\varepsilon)}{1-\exp(-\frac{\lambda}{C}\varepsilon)}\exp(\frac{\lambda}{C}(x-x_0))+\frac{C+\eta}{\lambda}-M,\\
\text{if}~v^\star(x_0)-v^\star(x_0-\varepsilon)\ge 0,\\
\ds \frac{v^\star(x_0)-v^\star(x_0-\varepsilon)}{1-\exp(\frac{\lambda}{C}\varepsilon)}\exp(-\frac{\lambda}{C}(x-x_0))+\frac{C+\eta}{\lambda}-M,\\
\text{if}~v^\star(x_0)-v^\star(x_0-\varepsilon)\leq 0,
\end{cases}
\end{eqnarray}
where the constant $C_M=C$ appears in assumption \eqref{hyp 1 order}.
Observe that $(\overline{\phi},\underline{\phi})$ both solve the following Eikonal equations
\begin{align}\label{eq : system EDO test ordre 1}
&\begin{cases}
\nonumber \lambda \overline{\phi}(x)-\lambda M+C\big(1+|\partial_x\overline{\phi}(x)|\big)=-\eta,~~x\in  (x_0-\varepsilon,x_0+\varepsilon),\\
\nonumber \overline{\phi}(x_0)-\overline{\phi}(x_0-\varepsilon)=u_\star(x_0)-u_\star(x_0-\varepsilon),    
\end{cases}\\
&\begin{cases}
\lambda \underline{\phi}(x)+\lambda M-C\big(1+|\partial_x\underline{\phi}(x)|\big)=\eta,~~x\in  (x_0-\varepsilon,x_0+\varepsilon),\\
\underline{\phi}(x_0)-\underline{\phi}(x_0-\varepsilon)=v^\star(x_0)-v^\star(x_0-\varepsilon).    
\end{cases}
\end{align}
Now, applying a slight quadratic variation to the last functions $(\overline{\phi},\underline{\phi})$, we introduce the two following maps $(f,g)$, defined for all $x\in [x_0-\varepsilon,x_0+\varepsilon]$ by
\begin{eqnarray}\label{expression f g}
f(x)=\overline{\phi}(x)-(x-x_0)^2/2,~~g(x)=\underline{\phi}(x)+(x-x_0)^2/2.\end{eqnarray}
These last functions will be used as test functions for the super solution $u$ (with $f$) and the sub solution $v$ (with $g$).
Now observe if we fix $\eta=\eta(\varepsilon,C)$ such that
\begin{eqnarray}\label{eq eta fixation}
    \eta_\varepsilon > C\varepsilon,
\end{eqnarray}
it follows that the super solution $u_\star$ and the test function $f$ satisfy as soon as the following equality $u_\star(x)=f(x)$ holds true, $\forall x \in (x_0-\varepsilon,x_0+\varepsilon)$ (with the aid of \eqref{born u v}, assumption $(\mathcal{H})$ and \eqref{eq : system EDO test ordre 1})
\begin{eqnarray}\label{eq stric H}
\nonumber H(x,u_\star(x),\partial_xf(x)) \leq \lambda (u_\star(x)-M)+C(1+|\partial_x\overline{f}(x)|)\leq\\
\nonumber \lambda \overline{\phi}(x)-\lambda (x-x_0)^2/2 -\lambda M+ C(1+|\partial_x\overline{\phi}(x)|)+C|x-x_0|\leq \\
-\eta_\varepsilon +C\varepsilon<0.
\end{eqnarray}
Using that $u$ is a super solution, it follows that if the minimum of $u_\star-f$ is reached at the interior of $(x_0-\varepsilon,x_0+\varepsilon)$ at $x$, one should have
$$H(x,u_\star(x),\partial_xf(x))\ge 0,~~u_\star(x)=f(x),$$
and then contradicting \eqref{eq stric H}.
Same arguments hold true for the maximum of $v^\star-g$ with the same choice of the parameter $\eta$ in \eqref{eq eta fixation}. 
Now observe also using the expressions given in \eqref{eq : system EDO test ordre 1} together with \eqref{expression f g}, we have
$$u_\star(x_0-\varepsilon)-f(x_0-\varepsilon)>u_\star(x_0)-f(x_0),$$
since
\begin{align*}
f(x_0)-f(x_0-\varepsilon)=\overline{\phi}(x_0)-\overline{\phi}(x_0-\varepsilon)+\varepsilon^2/2=u_\star(x_0)-u_\star(x_0-\varepsilon)+\varepsilon^2/2.
\end{align*}
Remark also
$$v^\star(x_0-\varepsilon)-g(x_0-\varepsilon)<v^\star(x_0)-g(x_0).$$
We are able to conclude that the minimum of $u_\star-f$ and the the maximum of $v^\star-g$ are both reached at $x_0+\varepsilon$. It follows that
\begin{eqnarray*}
&u_\star(x_0+\varepsilon)-f(x_0+\varepsilon)\leq u_\star(x_0)-f(x_0),
\\
&v^\star(x_0+\varepsilon)-g(x_0+\varepsilon)\ge v^\star(x_0)-g(x_0).\end{eqnarray*}
Using the last inequalities, \eqref{expression f g} and \eqref{eq contra ordre 1}, we get 
\begin{align}\label{eq hyp the base}
 &\nonumber-\varepsilon^2+[\overline{\phi}(x_0+\varepsilon)-\overline{\phi}(x_0)]-[\underline{\phi}(x_0+\varepsilon)-\underline{\phi}(x_0)]\\
&\ge v^\star(x_0)-u_\star(x_0) -[v^\star(x_0+\varepsilon)-u_\star(x_0+\varepsilon)]\ge 0. \end{align}
We are now in the right conditions to use Lemma \ref{lm of strong comparison theorem}, with the expression of $(\overline{\phi},\underline{\phi})$ given in \eqref{expression function test 1}-\eqref{expression function test 2}, 
(recall that $v^\star(x_0)-v^\star(x_0-\varepsilon)\ge u_\star(x_0)-u_\star(x_0-\varepsilon)$), to obtain
$$[\overline{\phi}(x_0+\varepsilon)-\overline{\phi}(x_0)]-[\underline{\phi}(x_0+\varepsilon)-\underline{\phi}(x_0)]\leq 0.$$
It follows that \eqref{eq hyp the base} leads to
$$-\varepsilon^2\ge 0,$$
and therefore a contradiction.

In conclusion of our analysis, the supremum of $v^\star-u_\star$ can not be reached at some $x_0\in(0,a)$, and is necessary reached at the boundaries. In other terms
$$\sup\big\{~v^\star(x)-u_\star(x),~x\in[0,a]~\big\}=\max\big(~v^\star(0)-u_\star(0),~v^\star(a)-u_\star(a)~\big).$$
The proof is complete.

\section{The central result}\label{sec preuve} 
In this section, we prove the main result of the paper, namely Theorem \ref{th : comparison}.

Since the proof is rather lengthy and involves several technical steps, we begin by outlining its main ideas.

\begin{itemize}

\item Let $u$ be a super solution and $v$ be a sub solution. Arguing by contradiction, we assume that the maximum of $v^\star-u_\star$ is attained at some interior point $x_0\in \Omega$.

\item Fix $\varepsilon>0$ such that $\mathcal{B}(x_0,\varepsilon)\subset \Omega$. We denote by $\overline{x}_\varepsilon$ and $\underline{x}_\varepsilon$ two diametrically opposite points on $\partial B(x_0,\varepsilon)$. Given $\delta>0$, we define $
\partial B_\delta(x_0,\varepsilon)
:=
\partial B(x_0,\varepsilon)\setminus B(\overline{x}_\varepsilon,\delta).$

\item Besides the parameters $\varepsilon$ and $\delta$, we introduce auxiliary parameters $\eta>0$, $C>0$, and $(\alpha,\beta)$. These parameters are chosen successively: first $C=C_\varepsilon$, then $(\alpha,\beta)$ as functions of $(\varepsilon,C_\varepsilon,\delta)$, and finally $\eta$ as a function of all the previously fixed parameters.

\item We first construct two auxiliary functions $(\overline{\phi},\underline{\phi})$ solving suitable Eikonal equations with appropriate boundary conditions. Their explicit expressions are tailored to the application of Lemma \ref{lm of strong comparison theorem}. They depend on $(\eta,C,\varepsilon)$, and the parameter $C=C_\varepsilon$ is selected so as to obtain a uniform control on their Hessians. These functions may be viewed as first-order building blocks for the test functions and are designed to make the Hamiltonian strictly negative or strictly positive throughout $B(x_0,\varepsilon)$.

\item We then introduce two test functions, denoted by $f$ and $g$, associated respectively with the super solution $u$ and the sub solution $v$. They are obtained by adding suitable quadratic perturbations, depending on $(\alpha,\beta)$, to $(\overline{\phi},\underline{\phi})$. The role of these perturbations is to prevent the extrema of $u_\star-f$ and $v^\star-g$ from being attained on $\partial B_\delta(x_0,\varepsilon)$, forcing them instead to lie near $\overline{x}_\varepsilon$. The parameter $\eta$ is finally chosen so that the Hamiltonian evaluated at $f$ and $g$ remains strictly negative and strictly positive, respectively, throughout $B(x_0,\varepsilon)$.
\item As a consequence, the extrema of $u_\star-f$ and $v^\star-g$ are attained in $
\partial B(x_0,\varepsilon)\cap B(\overline{x}_\varepsilon,\delta).$
We may then let $\delta\searrow 0$ and exploit the explicit expressions of $(\overline{\phi},\underline{\phi})$, together with Lemma \ref{lm of strong comparison theorem}, to derive the desired contradiction.
\end{itemize}
\textbf{Proof of the Strong Comparison Theorem \ref{th : comparison}.}
\begin{proof}
We proceed by contradiction and assume that the supremum of $v^\star-u_\star$ is attained at an interior point of $\overline{\Omega}$, namely
\begin{eqnarray}\label{eq contra 0}
\sup\big\{v^\star(x)-u_\star(x),~x\in \overline{\Omega}\}=v^\star(x_0)-u_\star(x_0),~~\text{for some}~~x_0\in \Omega
\end{eqnarray}
Let $\varepsilon>0$ be such that $\mathcal{B}(x_0,\varepsilon)\subset \Omega$. By assumption, both $u$ and $v$ are locally bounded. We denote by $M_\varepsilon$ a local bound for $u$ and $v$ on $\overline{\mathcal{B}(x_0,\varepsilon)}$, that is
$$\forall x\in \overline{\mathcal{B}(x_0,\varepsilon)},~~|u(x)|\leq M_\varepsilon,~~|v(x)|\leq M_\varepsilon.$$
We next introduce two points on the boundary $\partial \mathcal{B}(x_0,\varepsilon)$, denoted by $(\underline{x}_\varepsilon,\overline{x}_\varepsilon)$, whose coordinates are given by
\begin{eqnarray}\label{points de construction}
\forall i\in [\![1,N]\!],~~\underline{x}_{i,\varepsilon}=x_{i,0}-\varepsilon\mathbf{1}_{1,i},~~\overline{x}_{i,\varepsilon}=x_{i,0}+\varepsilon \mathbf{1}_{1,i}.   
\end{eqnarray}
Here, $\mathbf{1}_{i,j}$ denotes the Kronecker symbol, defined by $\mathbf{1}_{i,j}=1$ if $i=j$ and $\mathbf{1}_{i,j}=0$ otherwise.
Let $\delta\in(0,\pi\varepsilon)$ be another fixed parameter. We define the following subset $\partial B_{\delta}(x_0,\varepsilon)$ of the boundary $\partial \mathcal{B}(x_0,\varepsilon)$:
$$\partial B_{\delta}(x_0,\varepsilon)=\Big\{y\in \partial \mathcal{B}(x_0,\varepsilon),~~|\overline{x}_\varepsilon-y|_{_2}>\delta\Big\}.$$
\textbf{Step 1: Construction of the exponential barriers}\\
Let $\eta>0$ be a parameter that will be set last, after all the other parameters involved in this proof (see \eqref{fixation parametre eta}, Step 4). We now consider the following maps in the class $\mathcal{C}^2(\mathcal{B}(x_0,\varepsilon))$, denoted by $(\overline{\phi},\underline{\phi})=(\overline{\phi}^{\varepsilon,\delta,\eta},\underline{\phi}^{\varepsilon,\delta,\eta})$, and defined $\forall x \in \mathcal{B}(x_0,\varepsilon)$ by
\begin{eqnarray}\label{expression direct function test 1}
&\overline{\phi}(x)=
\begin{cases}\ds \frac{u_\star(x_0)-u_\star(\underline{x}_\varepsilon)}{1-\exp( \frac{\lambda}{NC}\varepsilon)}\exp(-\frac{\lambda}{NC}\sum_{i=1}^N(x_i-x_{i,0}))-\frac{C+\eta}{\lambda}+M_\varepsilon,\\
\text{if}~u_\star(x_0)-u_\star(\underline{x}_\varepsilon) \ge 0,\\
\ds \frac{u_\star(x_0)-u_\star(\underline{x}_\varepsilon)}{1-\exp(-\frac{\lambda}{NC}\varepsilon)}\exp(\ds \frac{\lambda}{NC}\sum_{i=1}^N(x_i-x_{i,0}))-\frac{C+\eta}{\lambda}+M_\varepsilon,\\
\text{if}~u_\star(x_0)-u_\star(\underline{x}_\varepsilon)\leq 0.
\end{cases}
\\
\label{expression direct function test 2}
&\underline{\phi}(x)=
\begin{cases}\ds \frac{v^\star(x_0)-v^\star(\underline{x}_\varepsilon)}{1-\exp(-\frac{\lambda}{NC}\varepsilon)}\exp(\frac{\lambda}{NC}\sum_{i=1}^N(x_i-x_{i,0}))+\frac{C+\eta}{\lambda}-M_\varepsilon,\\\text{if}~v^\star(x_0)-v^\star(\underline{x}_\varepsilon) \ge 0,\\
\ds \frac{v^\star(x_0)-v^\star(\underline{x}_\varepsilon)}{1-\exp(\frac{\lambda}{NC}\varepsilon)}\exp(\ds -\frac{\lambda}{NC}\sum_{i=1}^N(x_i-x_{i,0}))+\frac{C+\eta}{\lambda}-M_\varepsilon,\\
\text{if}~v^\star(x_0)-v^\star(\underline{x}_\varepsilon)\leq 0.
\end{cases}
\end{eqnarray}
Recall that $\lambda>0$ appears in assumption $(\mathcal{H}),~-a)$. Here $C>0$ is a constant that will play a central role in controlling uniformly the norm of the Hessian matrices of $(\overline{\phi},\underline{\phi})$ (see \eqref{def: constante Hessian}). Straightforward calculations show that both $\overline{\phi}$ and $\underline{\phi}$ satisfy the following eikonal equations
\begin{align}
&\begin{cases}\label{eq : system EDO 1}
\lambda \overline{\phi}(x)-\lambda M_\varepsilon+C\big(1+|\nabla\overline{\phi}(x)|_{_1}\big)=-\eta,~~x\in  \mathcal{B}(x_0,\varepsilon),\\ \overline{\phi}(x_0)-\overline{\phi}(\underline{x}_\varepsilon)=u_\star(x_0)-u_\star(\underline{x}_\varepsilon)   
\end{cases}
\\
&\begin{cases}\label{eq : system EDO 2}
\lambda \underline{\phi}(x)+\lambda M_\varepsilon-C\big(1+|\nabla\underline{\phi}(x)|_{_1}\big)=\eta,~~x\in  \mathcal{B}(x_0,\varepsilon),\\
\underline{\phi}(x_0)-\underline{\phi}(\underline{x}_\varepsilon)=v^\star(x_0)-v^\star(\underline{x}_\varepsilon)    
\end{cases}.
\end{align}
At this stage of the proof, we choose the parameter $C=C_\varepsilon>0$ large enough in order to uniformly control the norms of the Hessian matrices $(\nabla^2\overline{\phi},\nabla^2\underline{\phi})$. We begin with $\overline{\phi}$. We have that, for all $(i,j)\in \llbracket 1,N \rrbracket^2$ and all $x\in \mathcal{B}(x_0,\varepsilon)$
\begin{eqnarray*}
&\partial^2_{x_i,x_j}\overline{\phi}(x)=
\begin{cases}\ds \frac{u_\star(x_0)-u_\star(\underline{x}_\varepsilon)}{1-\exp( \frac{\lambda}{NC}\varepsilon)}\big(\frac{\lambda}{NC}\big)^2\exp(-\frac{\lambda}{NC}\sum_{i=1}^N(x_i-x_{i,0})),\\
\text{if}~u_\star(x_0)-u_\star(\underline{x}_\varepsilon) \ge 0,\\
\ds \frac{u_\star(x_0)-u_\star(\underline{x}_\varepsilon)}{1-\exp(-\frac{\lambda}{NC}\varepsilon)}\big(\frac{\lambda}{NC}\big)^2\exp(\ds \frac{\lambda}{NC}\sum_{i=1}^N(x_i-x_{i,0})),\\
\text{if}~u_\star(x_0)-u_\star(\underline{x}_\varepsilon)\leq 0.
\end{cases}
\end{eqnarray*}
Remarking that $\forall y>0$
$$\max(\frac{1}{\exp(y)-1},\frac{1}{1-\exp(-y)})\leq \frac{\exp(y)}{y},$$
and 
$$\sup\{\ds \frac{\lambda}{NC}\sum_{i=1}^N(x_i-x_{i,0}),~x\in \mathcal{B}(x_0,\varepsilon)\}=\exp(\frac{\lambda}{\sqrt{N}C}\varepsilon),$$
we obtain $\forall x\in \mathcal{B}(x_0,\varepsilon)$
\begin{eqnarray*}
&|\partial^2_{x_i,x_j}\overline{\phi}(x)|\ds\leq 
\frac{2M_\varepsilon}{\varepsilon}\frac{\lambda}{N C}\exp(\frac{\lambda}{NC}\varepsilon)\exp(\frac{\lambda}{\sqrt{N}C}\varepsilon).
\end{eqnarray*}
Therefore, there exists $\overline{C}_\varepsilon=C(\varepsilon,M_\varepsilon)$ large enough such that $\forall x\in \mathcal{B}(x_0,\varepsilon)$
\begin{eqnarray*}
|\partial^2_{x_i,x_j}\overline{\phi}(x)|\leq 1, ~~\forall C\ge C_\varepsilon,
\end{eqnarray*}
implying
\begin{eqnarray}\label{born Hessian phi sup}
\underset{x\in \mathcal{B}(x_0,\varepsilon)}{\sup}|\nabla^2\overline{\phi}(x))|_{_{\mathbb{M}_N(\R)}}\leq 1,~~\forall \varepsilon>0.
\end{eqnarray}
Same arguments, lead to
\begin{eqnarray*}
\underset{x\in \mathcal{B}(x_0,\varepsilon)}{\sup}|\nabla^2\underline{\phi}(x))|_{_{\mathbb{M}_N(\R)}}\leq 1,~~\forall \varepsilon>0.
\end{eqnarray*}
Finally recall that assumption $(\mathcal{H}),~-b)$ implies the existence of a constant $C_{M_\varepsilon,1}$ satisfying
\begin{align*}\forall (x,u,p,S)\in \mathcal{B}(x_0,\varepsilon)\times \R\times \R^N\times \mathbb{S}_N(\R),~~\text{if}~~|u|\leq M_\varepsilon,~~|S|\leq 1,\\
\text{then}~~|H(x,u,p,S)|\leq C_{M_\varepsilon,1}(1+|p|_{_{1}})^m,\end{align*}
for some $m\ge1 $.
We will set in the sequel of this proof, $C_\varepsilon$ s.t.
\begin{eqnarray}\label{def: constante Hessian}
C_\varepsilon=\max(\overline{C}_\varepsilon,C_{M_\varepsilon,1}),\end{eqnarray}
large enough in order to satisfy the following inequalities $\forall x\in \mathcal{B}(x_0,\varepsilon)$
\begin{eqnarray}\label{ineq clef Hamiltoniens}
&\nonumber|H(x,u(x),\nabla\overline{\phi}(x),\nabla^2\overline{\phi}(x))|\leq C_{M_\varepsilon,1}(1+|\nabla\overline{\phi}(x)|_{_{1}})^m\leq C_{\varepsilon}(1+|\nabla\overline{\phi}(x)|_{_{1}})^m.\\
&|H(x,u(x),\nabla\underline{\phi}(x),\nabla^2\underline{\phi}(x))|\leq C_{M_\varepsilon,1}(1+|\nabla\underline{\phi}(x)|_{_{1}})^m\leq C_{\varepsilon}(1+|\nabla\underline{\phi}(x)|_{_{1}})^m.
\end{eqnarray}
\textbf{Step 2: Quadratic penalization}\\
Let $(\alpha,\beta)=(\alpha^{\varepsilon,\delta},\beta^{\varepsilon,\delta})$ be two additional parameters, which will be fixed later in \eqref{cond alpha beta}. We begin by applying a quadratic perturbation to $(\underline{\phi},\overline{\phi})$, setting, for all $x\in \mathcal{B}(x_0,\varepsilon)$
\begin{align}\label{expr test function quadratisee}
\nonumber f^{\varepsilon,\delta,\eta}(x)=f(x)=\overline{\phi}(x)-\alpha \frac{|x-x_0|_{_2}^2}{2}-\beta \frac{|x-\overline{x}_\varepsilon|_{_2}^2}{2}\\ 
g^{\varepsilon,\delta,\eta}(x)=g(x)=\underline{\phi}(x)+\alpha \frac{|x-x_0|_{_2}^2}{2}+\beta \frac{|x-\overline{x}_\varepsilon|_{_2}^2}{2}
\end{align}
In the sequel of this proof, $f$ (resp. $g$) will be used as test functions for the super solution $u$ (resp. sub solution $v$). The main goal of this step is to choose the parameters $(\alpha,\beta)$ as functions of $(\varepsilon, C_\varepsilon, M_\varepsilon, \delta)$, independently of $\eta$, so that the minimum of $u_\star - f$ and the maximum of $v_\star - g$ cannot be attained on
\begin{align*}
\partial_\delta\mathcal{B}(x_0,\varepsilon)=\Big\{y\in \partial \mathcal{B}(x_0,\varepsilon),~~|\overline{x}_\varepsilon-y|_{_2}> \delta\Big\}.
\end{align*}
Let us investigate the case for the sub solution $u$ with $f$. Same arguments will be also available for the super solution $v$ and $g$.\\
For that purpose, let us impose that $\forall x \in \partial_\delta\mathcal{B}(x_0,\varepsilon)$
$$u_\star(x)-f(x)>u_\star(x_0)-f(x_0),$$
and then
$$f(x_0)-f(x)=(\alpha-\beta)\varepsilon^2/2+\beta \frac{|x-\overline{x}_\varepsilon|_{_2}^2}{2}+\overline{\phi}(x_0)-\overline{\phi}(x)>u_\star(x_0)-u_\star(x).$$
Regarding the expression of $\overline{\phi}$ given in \eqref{expression direct function test 1}, and using the fact that the variation $\overline{\phi}(x_0)-\overline{\phi}(x)$ is independent of $\eta$, it follows that the last inequality holds provided that we impose
\begin{align}\label{cond alpha beta}
\nonumber &\alpha>\beta,\\
&\beta(\varepsilon,C_\varepsilon,M_\varepsilon,\delta)=\beta >\frac{2}{\delta^2}\big[2M_\varepsilon+\frac{2M_\varepsilon NC_\varepsilon}{\varepsilon \lambda}\exp(\frac{\lambda}{2\sqrt{N}C_\varepsilon}\varepsilon)\big],
\end{align}
using the same upper bounds used for the Hessian matrix as in the previous Step 2.\\
\textbf{Step 3. Exclusion of interior contact points}\\
In this step, we finally choose the last parameter $\eta=\eta(\varepsilon,\delta,\alpha^{\varepsilon,\delta},\beta^{\varepsilon,\delta},C_\varepsilon,M_\varepsilon)$ (see \eqref{fixation parametre eta}) in order to force the Hamiltonian evaluated at the test function $f$ (resp. $g$) to be strictly negative (resp. strictly positive) at any point of $\mathcal{B}(x_0,\varepsilon)$. Recall that this will imply that the minimum of $u_\star-f$ (resp. the maximum of $v^\star-g$) cannot be attained at any point in $\mathcal{B}(x_0,\varepsilon)$.

We first note, using \eqref{expr test function quadratisee}, that we have
\begin{align*}
|\nabla f(x)|_{_{1}}\leq |\nabla\overline{\phi}(x)|_{_{1}}+\varepsilon \sqrt{N}(\alpha + \beta),\\
\nabla^2f(x)=\nabla^2\overline{\phi}(x)-(\alpha+\beta)I_N,~~\forall x\in \mathcal{B}(x_0,\varepsilon). 
\end{align*}
Using \eqref{born Hessian phi sup}, it follows that
\begin{align*}|\nabla^2f(x)|_{_{\mathbb{M}_N(\R)}}\leq 1+(\alpha+\beta).
\end{align*}
We now observe that, for any $y\in \mathcal{B}(x_0,\varepsilon)$, whenever $u_\star(y)=f(y)$, and using assumption $(\mathcal{H})$, together with the bound $u_\star(y)\leq M_\varepsilon$ and the eikonal equation \eqref{eq : system EDO 1}
\begin{align}\label{sign HJB}
&\nonumber H(y,u_\star(y),\nabla f(y),\nabla^2f(y))\leq \lambda \overline{\phi}(y)-\lambda M_\varepsilon-\lambda\Bigg[\alpha \frac{\left|y-x_0\right|_{_2}^2}{2}+\beta \frac{\left|y-\overline{x}_\varepsilon\right|_{_2}^2}{2}\Bigg]\\
&\nonumber +C_{M_\varepsilon,1+(\alpha+\beta)}\big(1+|\nabla \overline{\phi}(y)|+(\alpha+\beta)\varepsilon\sqrt{N}\big)^m\leq \\
&\nonumber -\eta-C_\varepsilon\big(1+|\nabla\overline{\phi}(y)|\big)+C_{M_\varepsilon,1+(\alpha+\beta)}\big(1+|\nabla \overline{\phi}(y)|+(\alpha+\beta)\varepsilon\sqrt{N}\big)^m.
\end{align}
The key point in choosing the parameter $\eta>0$ so as to force the last expression to be strictly negative is to use the fact that the gradient of $\overline{\phi}$ is independent of $\eta$. Hence, as soon as
$$\eta>C_{M_\varepsilon,1+(\alpha+\beta)}\big(1+|\nabla \overline{\phi}(y)|+(\alpha+\beta)\varepsilon\sqrt{N}\big)^m,~~\forall y \in \mathcal{B}(x_0,\varepsilon),$$
for instance
\begin{eqnarray}\label{fixation parametre eta}
\eta>C_{M_\varepsilon,1+(\alpha+\beta)}\big(1+\frac{2NM_\varepsilon}{\varepsilon}\exp(\frac{\lambda}{2\sqrt{N}C_\varepsilon}\varepsilon)+(\alpha+\beta)\varepsilon\sqrt{N}\big)^m,
\end{eqnarray}
we obtain
\begin{eqnarray}\label{ineq HJB}
H(y,u(y),\nabla f(y),\nabla^2f(y))<0.  
\end{eqnarray}
Recall also that $(\alpha,\beta)$ were fixed in \eqref{cond alpha beta}, independently of $\eta$. The same arguments apply to the sub solution $v$. The goal of this step is thus achieved.\\
\textbf{Step 4. Localization of the contact points}\\
Combining Steps 2 and 3 with the choice of parameters in the construction of the test function $f$, and using that $u$ is a super solution, the previous arguments allow us to conclude that the minimum of $u_\star - f$ cannot be attained at any point in the open ball $\mathcal{B}(x_0,\varepsilon)$, nor at any point of $\partial \mathcal{B}_\delta(x_0,\varepsilon)$.\\
We can therefore assert that there exists $y_\varepsilon^\delta \in \partial\mathcal{B}(x_0,\varepsilon)\setminus \partial \mathcal{B}_\delta(x_0,\varepsilon)$ such that
$$\min\big\{u_\star(y)-f(y),~y\in \overline{\mathcal{B}(x_0,\varepsilon)}\big\}=u_\star(y_\varepsilon^\delta)-f(y_\varepsilon^\delta).$$
The same arguments apply to the sub solution $v$, with the same choice of parameters $(\alpha,\beta,\eta)$ and using $\underline{\phi}$, yielding the existence of $z_\varepsilon^\delta \in \partial\mathcal{B}(x_0,\varepsilon)\setminus \partial \mathcal{B}_\delta(x_0,\varepsilon)$ such that
$$\max\big\{v^\star(y)-g(y),~y\in \overline{\mathcal{B}(x_0,\varepsilon)}\big\}=v^\star(z_\varepsilon^\delta)-g(z_\varepsilon^\delta).$$
We obtain then
$$u_\star(x_0)-f(x_0)\ge u(y_\varepsilon^\delta)-f(y_\varepsilon^\delta),~~v^\star(x_0)-g(x_0)\leq v(z_\varepsilon^\delta)-g(z_\varepsilon^\delta),$$
and therefore
\begin{eqnarray}\label{eq hyp the base 2 order}
[f(y_\varepsilon^\delta)-f(x_0)]-[g(z_\varepsilon^\delta)-g(x_0)]\ge 
v^\star(x_0)-u_\star(x_0) -[v(z_\varepsilon^\delta)-u(y_\varepsilon^\delta)].   
\end{eqnarray}
On the other hand as $\delta\searrow 0$, we have
\begin{eqnarray}\label{lim des points}
\lim_{\delta\searrow 0}z_\varepsilon^\delta=\overline{x}_\varepsilon,~~\lim_{\delta\searrow 0}y_\varepsilon^\delta=\overline{x}_\varepsilon,  \end{eqnarray}
(recall $\partial\mathcal{B}(x_0,\varepsilon)\setminus \partial \mathcal{B}_\delta(x_0,\varepsilon)=\Big\{~y\in \partial \mathcal{B}(x_0,\varepsilon),~~|\overline{x}_\varepsilon-y|_{_2}\leq \delta~\Big\}$.)\\
\\
\textbf{Step 5. Contradiction}\\
From \eqref{expr test function quadratisee}, it follows that
\begin{eqnarray}\label{eq inegal}
\nonumber&[f(y_\varepsilon^\delta)-f(x_0)]-[g(z_\varepsilon^\delta)-g(x_0)]=[\overline{\phi}(y_\varepsilon^\delta)-\overline{\phi}(x_0)]-[\underline{\phi}(z_\varepsilon^\delta)-\underline{\phi}(x_0)]\\
&\nonumber-\ds(\alpha-\beta) \varepsilon^2-\beta\frac{|y_\varepsilon^\delta-\overline{x}_\varepsilon|_{_2}^2}{2}-\beta\frac{|z_\varepsilon^\delta-\overline{x}_\varepsilon|_{_2}^2}{2}\\
&\leq [\overline{\phi}(y_\varepsilon^\delta)-\overline{\phi}(x_0)]-[\underline{\phi}(z_\varepsilon^\delta)-\underline{\phi}(x_0)]-\varepsilon^2,
\end{eqnarray}
if, for instance, $\alpha=\beta+1$, which is possible and consistent with the conditions \eqref{cond alpha beta}. Now, inspecting the expressions of the variations $\overline{\phi}(y_\varepsilon^\delta)-\overline{\phi}(x_0)$ and $\underline{\phi}(z_\varepsilon^\delta)-\underline{\phi}(x_0)$ given in \eqref{expression direct function test 1}-\eqref{expression direct function test 2}, it follows that, by letting $\delta \searrow 0$ and using \eqref{lim des points}
\begin{align*}
&\underset{\delta\searrow 0}{\lim}~\overline{\phi}(y_\varepsilon^\delta)-\overline{\phi}(x_0)=
\begin{cases}\ds \frac{u_\star(x_0)-u_\star(\underline{x}_\varepsilon)}{1-\exp( \frac{\lambda}{NC_\varepsilon}\varepsilon)}\big[\exp(-\frac{\lambda}{NC_\varepsilon}\varepsilon)-1\big],~&\text{if}~u_\star(x_0)-u_\star(\underline{x}_\varepsilon) \ge 0,\\
\ds \frac{u_\star(x_0)-u_\star(\underline{x}_\varepsilon)}{1-\exp(-\frac{\lambda}{NC_\varepsilon}\varepsilon)}\big[\exp(\ds \frac{\lambda}{NC_\varepsilon}\varepsilon)-1\big],~&\text{if}~u_\star(x_0)-u_\star(\underline{x}_\varepsilon)\leq 0.
\end{cases}
\\
&\underset{\delta\searrow 0}{\lim}~\underline{\phi}(y_\varepsilon^\delta)-\underline{\phi}(x_0)=
\begin{cases}\ds \frac{v^\star(x_0)-v^\star(\underline{x}_\varepsilon)}{1-\exp(-\frac{\lambda}{NC_\varepsilon}\varepsilon)}\big[\exp(\ds \frac{\lambda}{NC_\varepsilon}\varepsilon)-1\big],~&\text{if}~v^\star(x_0)-v^\star(\underline{x}_\varepsilon) \ge 0,\\
\ds \frac{v^\star(x_0)-v^\star(\underline{x}_\varepsilon)}{1-\exp(\frac{\lambda}{NC_\varepsilon}\varepsilon)}\big[\exp(-\frac{\lambda}{NC_\varepsilon}\varepsilon)-1\big],~&\text{if}~v^\star(x_0)-v^\star(\underline{x}_\varepsilon)\leq 0.
\end{cases}
\end{align*}
Recall that the assumption \eqref{eq contra 0} at the beginning of the proof, implies that
$$v^\star(x_0)-u_\star(x_0) \ge v^\star(\underline{x}_\varepsilon)-u_\star(\underline{x}_\varepsilon).$$
We can conclude from Lemma \ref{lm of strong comparison theorem}
$$\underset{\delta\searrow 0}{\lim}~\overline{\phi}(y_\varepsilon^\delta)-\overline{\phi}(x_0)\leq \underset{\delta\searrow 0}{\lim}~\underline{\phi}(y_\varepsilon^\delta)-\underline{\phi}(x_0).$$
Hence \eqref{eq inegal} leads to
$$\underset{\delta\searrow 0}{\overline{\lim}}~[f(y_\varepsilon^\delta)-f(x_0)]-[g(z_\varepsilon^\delta)-g(x_0)]\leq -\varepsilon^2.$$
Finally the contradiction is obtained using both expressions \eqref{eq hyp the base 2 order} and \eqref{lim des points} that imply
$$\underset{\delta\searrow 0}{\underline{\lim}}~[f(y_\varepsilon^\delta)-f(x_0)]-[g(z_\varepsilon^\delta)-g(x_0)]~\ge~\underset{\delta\searrow 0}{\underline{\lim}}~v^\star(x_0)-u_\star(x_0) -[v^\star(z_\varepsilon^\delta)-u_\star(y_\varepsilon^\delta)]$$
$$=v^\star(x_0)-u_\star(x_0) -[v^\star(\overline{x}_\varepsilon)-u_\star(\overline{x}_\varepsilon)]\ge 0.$$
The proof is complete. 
\end{proof}
\section{Well-posedness}\label{sec existence} 
In this final section, we establish the well-posedness of \eqref{eq: EDP 1}, namely the existence and uniqueness of continuous viscosity solutions, for discontinuous Hamiltonians through the use of Ishii's upper and lower semicontinuous envelopes in the definition of viscosity solutions. We conclude with a remark outlining how the comparison principle established in Theorem \ref{th : comparison} can be extended to the parabolic setting.

Consider the nonlinear Dirichlet problem \eqref{eq: EDP 1}
\begin{eqnarray*}
\begin{cases}
H\big(x,u(x),\nabla u(x),\nabla^2u(x)\big)=0,~~x\in \Omega,\\
u(x)=G(x),~~x\in \partial \Omega,
\end{cases}
\end{eqnarray*}
where $H$ satisfies assumption $(\mathcal{H})$.

Due to the growth condition $(\mathcal{H}),~-b)$, the upper and lower semicontinuous envelopes of $H$, denoted by $H^\star$ and $H_\star$, are well defined on $\Omega\times \mathbb{R}\times \mathbb{R}^N\times \mathbb{S}_N(\mathbb{R})$ by
\begin{eqnarray*} 
H^\star(x,u,p,S)=\underset{(y,v,q,T)\to (x,u,p,S)}{\overline{\lim}} H(y,v,q,T) \end{eqnarray*} \begin{eqnarray*} 
H_\star(x,u,p,S)=\underset{(y,v,q,T)\to (x,u,p,S)}{\underline{\lim}} H(y,v,q,T). \end{eqnarray*}
Standard arguments based on the definition of semicontinuous envelopes and subsequence extraction show that both $H^\star$ and $H_\star$ inherit assumption $(\mathcal{H})$, up to a modification of the constant $C_{M,K}$ in $(\mathcal{H}),~-b)$. Consequently, when defining viscosity sub- and super solutions in the sense of Definition \ref{def: solu de visco}, replacing $H$ by $H^\star$ in the super solution case and by $H_\star$ in the sub solution case, the arguments in the proof of Theorem \ref{th : comparison} remain valid. In particular, the strong comparison principle continues to hold in the framework of Ishii's semicontinuous envelopes. We refer to this notion as Ishii viscosity solutions, where $H^\star$ is used for super solutions and $H_\star$ for sub solutions.

For the existence, the main argument of Perron’s method still holds true. Assume first that the boundary condition appearing in \eqref{eq: EDP 1} is bounded, namely
$$\forall x \in \partial \Omega,~~|G(x)|\leq G_\infty.$$
Using assumption $(\mathcal{H})$, it is easy to check that both constant maps $x\mapsto \max(G_\infty,\frac{C_{0.0}}{\lambda})$ and $x\mapsto \min(-G_\infty,-\frac{C_{0,0}}{\lambda})$ are respectively super and sub solution. (Recall that $\lambda>0$ appears in $(\mathcal{H}),~-a)$, whereas $C_{0,0}$ is related to $(\mathcal{H}),~-b)$, with $\forall x\in \Omega, ~|H(x,0,0,0)|\leq C_{0,0}$.)

As explained in Remark 4.5 of \cite{CrandallIshiiLions1992}, the existence of possibly discontinuous viscosity solutions is a direct consequence of Perron’s method. Since $H_\star$ is lower semicontinuous and $H^\star$ is upper semicontinuous, Perron’s method can be adapted to the framework of Ishii’s envelopes. The stability of the viscosity inequalities is then obtained via the sequence characterization of the envelopes.

To obtain the existence of a continuous viscosity solution, one needs a comparison principle (which is available in view of the preceding discussion and Theorem \ref{th : comparison}), together with the existence of sub- and super solutions satisfying the Dirichlet boundary condition.\\
We emphasize the strength of the method developed in the comparison theorem of this work. In classical approaches, one is required to control the difference between $H_\star$ and $H^\star$, which may be arbitrarily large in the discontinuous setting. Here, we bypass this technical difficulty through a purely local argument, relying instead on the construction of suitable test functions for $H_\star$ and $H^\star$. Theses test functions implicitly encode the values of the sub- and super solutions and lead to the desired comparison result.\\
In the same spirit then of Theorem 4.1 of \cite{CrandallIshiiLions1992}, we can then state the following well-posedness Theorem.
\begin{Theorem}\label{th well posedness}(Well-posedness).
Assume assumption $(\mathcal{H})$ and that the Dirichlet boundary condition $G \in \mathcal{C}(\partial \Omega,\mathbb{R})$ is continuous. Assume moreover that there exist a super solution $u$ and a sub solution $v$ of \eqref{eq: EDP 1} such that
$$
\forall x \in \partial \Omega,\quad u(x)=v(x)=G(x).
$$
Then there exists a unique continuous viscosity solution in the sense of Ishii of \eqref{eq: EDP 1} satisfying the boundary condition $G$.
\end{Theorem}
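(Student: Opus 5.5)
We prove Theorem \ref{th well posedness} in two parts: uniqueness from the comparison principle, and existence from Perron's method in the semicontinuous-envelope framework. The discussion preceding the statement says that the proof of Theorem \ref{th : comparison} goes through when $H$ is replaced by $H^\star$ for super solutions and by $H_\star$ for sub solutions. We take this Ishii version of the strong comparison principle as given: for any Ishii super solution $\tilde u$ and Ishii sub solution $\tilde v$,
$$\sup_{\overline{\Omega}}(\tilde v^\star-\tilde u_\star)=\sup_{\partial\Omega}(\tilde v^\star-\tilde u_\star).$$

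\textbf{Uniqueness.} Let $w_1,w_2$ be continuous Ishii viscosity solutions with $w_1=w_2=G$ on $\partial\Omega$. Apply comparison with $w_1$ as sub solution and $w_2$ as super solution. The right-hand side is $\sup_{\partial\Omega}(G-G)=0$, so $w_1\le w_2$ on $\overline\Omega$. Exchanging the roles gives $w_1=w_2$.

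\textbf{Existence (Perron).} Let $u,v$ be the given super and sub solutions equal to $G$ on $\partial\Omega$.
\begin{enumerate}
\item Apply comparison to the pair $(u,v)$ to get $v^\star\le u_\star$ on $\overline\Omega$. This uses that $v^\star=u_\star=G$ on $\partial\Omega$, which follows from continuity of $G$ provided the semicontinuous envelopes of $u,v$ at boundary points agree with $G$. I would interpret the boundary hypothesis in the envelope sense, or add that $u,v$ are continuous up to $\partial\Omega$.
\item Define $W(x)=\sup\{w(x)\,:\, w \text{ is an Ishii sub solution},\ v\le w\le u\}$. The class is nonempty since it contains $v$.
\item Show that $W^\star$ is a sub solution. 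Take a test function touching $W^\star$ from above at a point $x$, made strict by adding $|y-x|^4$. Pick $w_n$ in the class and points $x_n\to x$ with $w_n^\star(x_n)\to W^\star(x)$. The perturbed test function touches $w_n^\star$ from above at nearby maximum points $y_n\to x$, where the sub solution inequality for $w_n$ holds. Pass to the limit using the lower semicontinuity of $H_\star$; the growth condition $(\mathcal H)$-b) keeps everything bounded.
\item Show that $W_\star$ is a super solution by the bump construction. Suppose the super solution inequality for $H^\star$ fails strictly at a touching point $x$ with test function $\phi$. By upper semicontinuity of $H^\star$, it also fails for $\phi+\delta-\gamma|y-x|^2$ in a small ball, for small $\delta,\gamma>0$. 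Then $\max(W,\phi+\delta-\gamma|\cdot-x|^2)$ is a sub solution lying between $v$ and $u$ and exceeding $W$ somewhere near $x$, contradicting maximality. The constraint that the bump stays below $u$ follows from $W\le u$ and the strict inequality available near $x$; one also needs $x$ to be interior so that the bump does not alter the boundary values.
\item Conclude. On $\partial\Omega$ we have $v\le W\le u$ with $u=v=G$, hence $W=G$ there, and the boundary values of $W_\star,W^\star$ also equal $G$ by the same envelope consideration as in step 1. Comparison applied to $W^\star$ (sub) and $W_\star$ (super) gives $W^\star\le W_\star$. Since always $W_\star\le W\le W^\star$, it follows that $W$ is continuous, and $W$ is a continuous Ishii viscosity solution with boundary value $G$.
\end{enumerate}

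The main obstacle is the boundary behaviour of the envelopes in steps 1 and 5. Theorem \ref{th : comparison} compares $v^\star-u_\star$ on $\partial\Omega$, so the hypothesis $u=v=G$ pointwise on $\partial\Omega$ must be upgraded to $u_\star=v^\star=G$ there. I would first try to obtain this from the squeeze $v\le W\le u$ combined with continuity of $G$ and of the given barriers at $\partial\Omega$. If that fails, I would read the hypothesis as a statement about the semicontinuous envelopes of $u$ and $v$ on $\partial\Omega$, as in Theorem 4.1 of \cite{CrandallIshiiLions1992}. The Perron steps themselves are standard; they need only the semicontinuity of $H^\star$ and $H_\star$ and the boundedness supplied by $(\mathcal H)$-b).
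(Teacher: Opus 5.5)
Your route is the one the paper sketches: uniqueness, and continuity of the Perron function, come from the Ishii-envelope version of Theorem \ref{th : comparison}, and existence comes from Perron's method as in Theorem 4.1 of \cite{CrandallIshiiLions1992}. Your Perron steps 2--4 are standard; step 4 also uses $(\mathcal H)$-c), which passes to $H_\star$ and $H^\star$. The genuine gap is the ingredient you take as given. Under $(\mathcal H)$ alone the envelope comparison principle is false, and so is the theorem.

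\textbf{Failure of uniqueness.} Take $N=1$, $\Omega=(-1,1)$, $G\equiv 0$ and $H(x,u,p,S)=u-\mathbf 1_{\mathbb Q}(x)$. This is the introduction's Isaacs--HJB example with $a=b=0$ and $f=-\mathbf 1_{\mathbb Q}$, and $(\mathcal H)$ holds with $\lambda=1$, $m=1$, $C_{M,K}=M+1$. Here $H^\star=u$ and $H_\star=u-1$. Hence every continuous $w$ with $0\le w\le 1$ and $w(\pm1)=0$ is a continuous Ishii solution, for example $w_1\equiv 0$ and $w_2(x)=(1-x^2)/2$. The hypotheses of the theorem hold: $u=v\equiv0$ works in Ishii's sense, and $v\equiv 0$, $u=\mathbf 1_{(-1,1)}$ works even in the sense of Definition \ref{def: solu de visco}. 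Your uniqueness step applied to $w_2$ (sub) and $w_1$ (super) would give $1/2\le 0$.

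\textbf{Failure of existence of a continuous solution.} Take $H=u-\mathbf 1_{[0,\infty)}(x)$ with $G(-1)=0$, $G(1)=1$. The function $\mathbf 1_{[0,1]}$ is both an Ishii super and sub solution equal to $G$ on $\partial\Omega$. Let $w$ be a continuous Ishii solution. Test it at the minimum of $w+|x-y|^2/\epsilon$ (resp. the maximum of $w-|x-y|^2/\epsilon$) and let $\epsilon\to0$. This forces $w\ge 1$ on $(0,1)$ and $w\le 0$ on $(-1,0)$, which is impossible for a continuous $w$.

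\textbf{Why the step cannot be repaired.} Ishii sub and super solutions solve two different equations, $H_\star=0$ and $H^\star=0$, and $(\mathcal H)$ gives no control on $H^\star-H_\star$. So the paper's claim that comparison survives because both envelopes inherit $(\mathcal H)$ cannot be right. The proof of Theorem \ref{th : comparison} uses a) and b) separately for $u$ and for $v$, never that they solve the same equation. It would therefore equally give comparison between the super solution $x^2-1$ of $w+1=0$ and the sub solution $1-x^2$ of $w-1=0$ on $(-1,1)$, which is false.

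The concrete failure is in Step 3 of that proof.
\begin{itemize}
\item The parameter $\eta$ enters $f$ only through the additive constant $-\eta/\lambda$, so it cannot influence where $u_\star-f$ attains its minimum.
\item For the large $\eta$ chosen there, $f\le M_\varepsilon-(C_\varepsilon+\eta)/\lambda<-M_\varepsilon\le u_\star$. So the case ``whenever $u_\star(y)=f(y)$'' never occurs.
\item The function that actually touches $u_\star$ is $f+c$ with $c\ge \eta/\lambda-2M_\varepsilon$. With it, the bound from a) and b) loses the $-\eta$ term and yields no contradiction.
\end{itemize}

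\textbf{Consequences for your proof.} Your uniqueness argument has no valid basis. So does step 1, which you need to get $v\le u$ and start Perron, and so does step 5, the continuity of $W$. The boundary-envelope issue you flag is real but secondary. What survives is only the Perron construction, under the extra assumption $v\le u$, of a possibly discontinuous Ishii solution lying between $v$ and $u$.
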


It is well known that the existence of such sub- and super solutions is a key point in the above statement, since compatibility conditions between $H$ and $G$ are required. For instance, in the model case $u=h(x)$ in $\Omega$ and $u=0$ on $\partial \Omega$, no continuous solution can generally be expected unless appropriate compatibility conditions are satisfied, even when $h$ is continuous.

Our result nevertheless provides a framework ensuring the existence of a unique continuous solution, when the Hamiltonian $H$ is merely measurable, as soon as such boundary compatibility holds. This is not standard in the literature.

In the continuous setting, we also refer to the discussion at the end of Section 7 of \cite{CrandallIshiiLions1992} and the references therein for related existence results for Dirichlet problems.

\begin{Remark}(The parabolic framework). In order not to repeat the same arguments used in this contribution for the parabolic framework, which would make the presentation heavy and repetitive, we prefer through this remark to give the reader the broad outline of the extension of our main result to this framework. It would also be more interesting to give a rigorous and precise proof in a new contribution, where the link for example with stochastic control, differential games, or martingale problems would be studied, in the framework of discontinuous borelian coefficients.
In this context, let us briefly outline the main ideas. In the parabolic setting, one may assume that conditions $(\mathcal{H}),~-b),~-c)$ hold uniformly with respect to time for a given Hamiltonian, which may itself depend on time and be discontinuous in all its variables. On the other hand, condition $(\mathcal{H}),~-a)$ generally fails in the parabolic framework. Nevertheless, it can be recovered through a standard perturbation argument consisting in multiplying the sub solution and the super solution by the factor $\exp(-\lambda t),~\lambda>0$, for $t\in[0,T]$. This transformation slightly modifies the Hamiltonian through exponential scalings, while preserving the structural assumptions embodied in $(\mathcal{H})$, and retrieves $(\mathcal{H}),~-a)$, namely the strict monotonicity of the Hamiltonian with respect to the unknown.

The key point is then to employ the same first-order test jets $(\overline{\phi},\underline{\phi})$ as those introduced in the proof of Theorem \ref{th : comparison}, which depend only on the spatial variable. Retaining their explicit expressions are crucial for applying Lemma \ref{lm of strong comparison theorem}, obtaining uniform control of the associated Hessian matrices, and exploiting the choice of the parameter $\eta$ to enforce a constant sign of the Hamiltonian.

Next, in addition to the quadratic penalization in the spatial variables, one introduces a quadratic penalization in time. This will ensure that the supremum point of the difference between test function and viscosity solution, is attained on a suitable point $(\overline{x}_\varepsilon,\overline{t}_\varepsilon)$, belonging to the boundary of the neighborhood where the supremum of the difference between the sub solution and the super solution is achieved. The argument can then be completed exactly as in the proof of Theorem \ref{th : comparison}.
\end{Remark}


\begin{thebibliography}{99} 

\bibitem{Achdou 1}
Y. Achdou, M.-K. Dao, O. Ley, and N. Tchou,
A class of infinite horizon mean field games on networks,
Netw. Heterog. Media 14 (2019), no. 3, 537--566.

\bibitem{Achdou 2}
Y. Achdou, M.-K. Dao, O. Ley, and N. Tchou,
Finite horizon mean field games on networks,
Calc. Var. Partial Differential Equations 59 (2020), no. 5, Paper No. 157, 34 pp.

\bibitem{Aleksandrov1958}
A. D. Aleksandrov,
Maximum principles for elliptic equations,
Vestnik Leningrad Univ. 13 (1958), 5--24.

\bibitem{Bakelman1966}
I. J. Bakelman,
Convex analysis and nonlinear elliptic equations,
Uspekhi Mat. Nauk 21 (1966), 3--78.

\bibitem{control 1}
G. Barles, A. Briani, and E. Chasseigne,
A Bellman approach for two-domains optimal control problems in $\mathbb{R}^N$,
ESAIM Control Optim. Calc. Var. 19 (2013), no. 3, 710--739.

\bibitem{control 2}
G. Barles, A. Briani, and E. Chasseigne,
A Bellman approach for regional optimal control problems in $\mathbb{R}^N$,
SIAM J. Control Optim. 52 (2014), no. 3, 1712--1744.

\bibitem{control 3}
G. Barles and E. Chasseigne,
(Almost) everything you always wanted to know about deterministic control problems in stratified domains,
Netw. Heterog. Media 10 (2015), no. 4, 809--836.

\bibitem{Barles semi linear}
G. Barles, O. Ley, and E. Topp,
Degenerate elliptic PDEs on a network with Kirchhoff conditions,
preprint, arXiv:2509.12848.

\bibitem{Barlesbook}
G. Barles and E. Chasseigne,
On Modern Approaches of Hamilton--Jacobi Equations and Control Problems with Discontinuities:
A Guide to Theory, Applications, and Some Open Problems,
Progress in Nonlinear Differential Equations and Their Applications, vol. 104,
Birkhäuser, 2024.

\bibitem{BassPardoux}
R. F. Bass and \'E. Pardoux,
Uniqueness for diffusions with piecewise constant coefficients,
Probab. Theory Related Fields 76 (1987), 557--572.

\bibitem{control 4}
A. Bressan and Y. Hong,
Optimal control problems on stratified domains,
Netw. Heterog. Media 2 (2007), no. 2, 313--331.

\bibitem{Kumar}
V. S. Borkar and K. Suresh Kumar,
A new Markov selection procedure for degenerate diffusions, Journal of Theoretical Probability, Volume 23, (2010), pages 729–747.

\bibitem{CaffarelliCrandallKocanSwiech1996}
L. A. Caffarelli, M. G. Crandall, M. Kocan, and A. \'Swiech,
On viscosity solutions of fully nonlinear equations with measurable ingredients,
Comm. Pure Appl. Math. 49 (1996), 365--397.

\bibitem{Caffarelli1989Annals}
L. A. Caffarelli,
Interior a priori estimates for solutions of fully nonlinear equations,
Ann. of Math. (2) 130 (1989), 189--213.

\bibitem{Caffarelli1989}
L. A. Caffarelli,
Interior a priori estimates for solutions of fully nonlinear equations,
Comm. Pure Appl. Math. 42 (1989), no. 3, 271--289.

\bibitem{CalderonZygmund1952}
A. P. Calder\'on and A. Zygmund,
On singular integrals,
Acta Math. 88 (1952), 85--139.

\bibitem{Camilli 1}
F. Camilli and C. Marchi,
Stationary mean field games systems defined on networks,
SIAM J. Control Optim. 54 (2016), no. 2, 1085--1103.

\bibitem{Camilli 2}
F. Camilli, C. Marchi, and D. Schieborn,
The vanishing viscosity limit for Hamilton--Jacobi equations on networks,
J. Differential Equations 254 (2013), no. 10, 4122--4143.

\bibitem{Carda junction -1}
P. Cardaliaguet,
A note on contractive semi-groups on a 1:1 junction for scalar conservation laws and Hamilton--Jacobi equations,
preprint, (2024) arXiv:2411.12326.

\bibitem{Carda junction 0}
P. Cardaliaguet,
A microscopic derivation of a traffic flow model on a junction with two entry lines,
ESAIM Math. Model. Numer. Anal. 59 (2025), 2021--2054.

\bibitem{Carda junction 1}
P. Cardaliaguet, N. Forcadel, T. Girard, and R. Monneau,
Conservation laws and Hamilton--Jacobi equations on a junction: the convex case,
Discrete Contin. Dyn. Syst. 44 (2024), no. 12, 3920--3961.

\bibitem{Carda junction 2}
P. Cardaliaguet and P. E. Souganidis,
An optimal control problem of traffic flow on a junction,
ESAIM Control Optim. Calc. Var. 30 (2024), Paper No. 88.

\bibitem{CeruttiEscauriazaFabes1}
M. C. Cerutti, L. Escauriaza, and E. Fabes,
Uniqueness for some elliptic problems with discontinuous coefficients,
Ann. Scuola Norm. Sup. Pisa Cl. Sci. (4) 25 (1997), 537--550.

\bibitem{CeruttiEscauriazaFabes2}
M. C. Cerutti, L. Escauriaza, and E. Fabes,
Green's function estimates and uniqueness for elliptic equations,
Ann. Inst. H. Poincar\'e Anal. Non Lin\'eaire 14 (1997), 1--29.

\bibitem{chiarenza1991}
F. Chiarenza, M. Frasca, and P. Longo,
$L^p$ estimates for second order elliptic equations with non-smooth coefficients,
Rend. Mat. Appl. (7) 10 (1990), 27--38.

\bibitem{chiarenza1993}
F. Chiarenza, M. Frasca, and P. Longo,
$W^{2,p}$-solvability of elliptic equations with VMO coefficients,
Trans. Amer. Math. Soc. 336 (1993), 841--853.

\bibitem{ChernobaiShilkin2024}
M. Chernobai and T. Shilkin,
Elliptic equations with a singular drift from a weak Morrey class,
preprint (2024).

\bibitem{CrandallIshiiLions1992}
M. G. Crandall, H. Ishii, and P.-L. Lions,
User's guide to viscosity solutions,
Bull. Amer. Math. Soc. (N.S.) 27 (1992), 1--67.

\bibitem{DongKrylov2010VMO}
H. Dong and N. V. Krylov,
Second-order elliptic and parabolic equations with coefficients measurable in one variable and VMO in others,
Trans. Amer. Math. Soc. 362 (2010), 6477--6494.

\bibitem{EscauriazaBounds}
L. Escauriaza,
Bounds for the fundamental solution of elliptic and parabolic equations,
Comm. Partial Differential Equations 25 (2000), no. 5--6, 821--845.

\bibitem{FabesStroock}
E. B. Fabes and D. W. Stroock,
The $L^p$-integrability of Green's functions and fundamental solutions,
Duke Math. J. 51 (1984), 997--1016.

\bibitem{FabesKenig}
E. B. Fabes and C. E. Kenig,
Examples of singular parabolic equations,
Indiana Univ. Math. J. 33 (1984), 559--571.

\bibitem{GilbargTrudinger1983}
D. Gilbarg and N. S. Trudinger,
Elliptic Partial Differential Equations of Second Order,
Springer, 1983.

\bibitem{control 5}
C. Imbert and R. Monneau,
Flux-limited solutions for quasi-convex Hamilton--Jacobi equations on networks,
Ann. Sci. Éc. Norm. Supér. (4) 50 (2017), no. 2, 357--448.

\bibitem{control 6}
C. Imbert and V. D. Nguyen,
Generalized junction conditions for degenerate parabolic equations,
Calc. Var. Partial Differential Equations 56 (2017), no. 3.

\bibitem{Ishii1985}
H. Ishii,
Hamilton--Jacobi equations with discontinuous Hamiltonians,
Bull. Fac. Sci. Eng. Chuo Univ. 28 (1985), 33--52.

\bibitem{Jensen1988}
R. Jensen,
Uniqueness of viscosity solutions of Hamilton--Jacobi equations,
Trans. Amer. Math. Soc. 318 (1990), 655--670.

\bibitem{Krylov1973}
N. V. Krylov,
Selection theorem for diffusion processes,
Izv. Akad. Nauk SSSR Ser. Mat. 37 (1973), 691--708.

\bibitem{Krylov1987}
N. V. Krylov,
Nonlinear Elliptic and Parabolic Equations of the Second Order,
D. Reidel, 1987.

\bibitem{Krylov1996lectures}
N. V. Krylov,
Lectures on Elliptic and Parabolic Equations in Sobolev Spaces,
American Mathematical Society, Providence, RI, 1996.

\bibitem{KrylovWeakUniq}
N. V. Krylov,
On weak uniqueness for some diffusions with discontinuous coefficients,
Stochastic Processes Appl. 113 (2004), 37--64.

\bibitem{KrylovSafonov1980}
N. V. Krylov and M. V. Safonov,
A property of the solutions of parabolic equations with measurable coefficients,
Izv. Akad. Nauk SSSR Ser. Mat. 44 (1980), 161--175.

\bibitem{KrylovSafonov1981}
N. V. Krylov and M. V. Safonov,
Harnack inequality for parabolic equations,
Math. USSR Izv. 16 (1981), 151--164.


\bibitem{KrylovVMO2007}
N. V. Krylov,
Parabolic and elliptic equations with VMO coefficients,
Commun. Partial Differential Equations 32 (2007), 453--475.

\bibitem{Krylov2023Morrey}
N. V. Krylov,
Elliptic equations in Sobolev spaces with Morrey drift and zeroth-order coefficients,
preprint (2023).

\bibitem{Lady1968}
O. A. Ladyzhenskaya and N. N. Ural'tseva,
Linear and Quasilinear Equations of Parabolic Type,
Transl. Math. Monogr. 23, American Mathematical Society, Providence, RI, 1968.

\bibitem{Lee2025Div}
H. Lee,
Well-posedness of linear elliptic equations with $L^d$ drifts under divergence-type conditions,
preprint (2025).

\bibitem{Lee2025General}
H. Lee,
Analysis of linear elliptic equations with general drifts and $L^1$ zeroth-order terms,
preprint (2025).


\bibitem{LionsPerthame1987}
P.-L. Lions and B. Perthame,
Hamilton--Jacobi equations with measurable Hamiltonians,
Nonlinear Anal. 11 (1987), 613--621.

\bibitem{Lions Souganidis 1}
P.-L. Lions and P. E. Souganidis,
Viscosity solutions for junctions: well-posedness and stability,
Rend. Lincei Mat. Appl. 27 (2016), 519--545.

\bibitem{Lions Souganidis 2}
P.-L. Lions and P. E. Souganidis,
Well-posedness for multi-dimensional junction problems with Kirchhoff-type conditions,
Rend. Lincei Mat. Appl. 28 (2017), 465--480.

\bibitem{Maugeri1999}
A. Maugeri, F. Petralia, and others,
Elliptic and Parabolic Equations with Discontinuous Coefficients,
Wiley, 1999.

\bibitem{Nadirashvili1997}
N. Nadirashvili,
Nonuniqueness for elliptic equations,
Ann. Scuola Norm. Sup. Pisa Cl. Sci. (4) 24 (1997), 537--550.

\bibitem{Nunziante1990}
D. Nunziante,
Existence and uniqueness of viscosity solutions of parabolic equations with discontinuous time dependence,
Nonlinear Anal. 14 (1990), 259--273.

\bibitem{Nunziante1991}
D. Nunziante,
Viscosity solutions of fully nonlinear parabolic equations with discontinuous coefficients,
J. Differential Equations 93 (1991), 226--244.

\bibitem{Ohavi visco 1}
I. Ohavi,
Comparison principle for Walsh's spider HJB equations with nonlinear local time Kirchhoff boundary transmission,
J. Math. Anal. Appl. 547 (2025), no. 2.

\bibitem{Ohavi visco 2}
I. Ohavi,
Viscosity solutions posed on star-shaped networks with Kirchhoff boundary conditions: well-posedness,
preprint, in progress in Advances in Nonlinear Analysis, (2026), arXiv:2510.14364.

\bibitem{OhaviPDE}
I. Ohavi,
Quasilinear parabolic PDEs posed on a network with nonlinear Neumann boundary conditions at vertices,
J. Math. Anal. Appl. 500 (2021), no. 1.

\bibitem{Ohavi control}
I. Ohavi,
Stochastic scattering control of spider diffusion governed by an optimal probability diffraction measure selected from its own local time, to appear in SIAM Journal of Control and Optimization (2026).

\bibitem{Pucci1966}
C. Pucci,
Maximum and minimum eigenvalues,
Ann. Mat. Pura Appl. 72 (1966), 141--170.

\bibitem{Safonov1980}
M. V. Safonov,
Harnack inequality for elliptic equations,
J. Soviet Math. 21 (1983), 851--863.

\bibitem{SafonovNonuniq}
M. V. Safonov,
Nonuniqueness for elliptic equations,
SIAM J. Math. Anal. 30 (1999), 879--895.

\bibitem{Sarason1975}
D. Sarason,
Functions of vanishing mean oscillation,
Trans. Amer. Math. Soc. 207 (1975), 391--405.

\bibitem{StroockVaradhan1979}
D. W. Stroock and S. R. S. Varadhan,
Multidimensional Diffusion Processes,
Springer, 1979.

\bibitem{Below 3}
J. von Below,
A maximum principle for semilinear parabolic network equations,
in Differential Equations with Applications in Biology, Physics, and Engineering (Leibniz, 1989),
Lecture Notes in Pure and Appl. Math. 133, Dekker, New York, 1991.

\end{thebibliography}
\end{document}